\DeclarePairedDelimiter{\ceil}{\lceil}{\rceil}
\numberwithin{equation}{section}
\newtheorem{theorem}{Theorem}[section]
\newtheorem{lemma}[theorem]{Lemma}
\theoremstyle{definition}
\theoremstyle{remark}
\newtheorem{remark}{Remark}[section]
\newcommand\bR{\mathbb{R}}
\newcommand\bP{\mathbb{P}}
\newcommand\cD{\mathcal{D}}
\newcommand\cF{\mathcal{F}}
\newcommand{\bN}{\mathbb{N}}
\newcommand{\E}{\mathbb{E}}
\newcommand{\R}{\mathbb{R}}
\newcommand{\bit}{\begin{itemize}}
\newcommand{\eit}{\end{itemize}}
\newcommand\D{\partial}
\newcommand{\one}{\mathbf{1}}
\newcommand{\eps}{\varepsilon}
\newcommand{\N}{\mathbb{N}}
\begin{document}

\author{K. Dareiotis and M. Gerencs\'er}

\address[K. Dareiotis]{Max Planck Institute for Mathematics in the Sciences, Leipzig / University of Leeds}
\email{k.dareiotis@leeds.ac.uk}

\address[M. Gerencs\'er]{Institute of Science and Technology Austria/ Vienna University of Technology }
\email{mate.gerencser@tuwien.ac.at}

\title[Euler-Maruyama scheme with irregular drift]{On the regularisation of the noise for the Euler-Maruyama scheme with irregular drift}

\date{\today}

\begin{abstract}
The strong rate of convergence  of the Euler-Maruyama scheme for nondegenerate SDEs with irregular drift coefficients is considered.
In the case of $\alpha$-H\"older drift in the recent literature the rate $\alpha/2$ was proved in many related situations.
By exploiting the regularising effect of the noise more efficiently, we show that the rate is in fact arbitrarily close to $1/2$ for all $\alpha>0$. 
The result extends to Dini continuous coefficients, while in $d=1$ also to all bounded measurable coefficients.
\end{abstract}
\maketitle

\section{Introduction and main results}
We consider stochastic differential equations (SDEs)
\begin{equ}\label{eq:main}
dX_t=b(X_t)\,dt+\,dW_t,\quad X_0=x_0,
\end{equ}
on a fixed time horizon $[0,T]$, driven by a $\bR^d$-valued Wiener process $W=(W^1,...,W^d)$
on a filtered probability space $(\Omega, \mathcal{F}, (\mathcal{F}_t)_{t\in[0,T]}, \bP)$ with $(\mathcal{F})_{t \in [0,T]}$ satisfying the usual conditions, 
where $x_0$ is an $\R^d$-valued $\cF_0$-measurable random variable with finite variance, and the drift coefficient $b=(b^1,\ldots,b^d)$ is a measurable function on $\R^d$
with values in $\R^d$.
It is well known that equation \eqref{eq:main} has the remarkable property that even if $b$ is only known to be bounded and measurable
then a unique strong solution exists \cite{Zvonkin, Veretennikov}, for further developments see among others \cite{GyMart, KR, Davie, FF, Shap}.

It has recently been of interest to study such regularising effect of the noise in the discretisation of SDEs with irregular drift.
The most common approximation method is the Euler-Maruyama scheme
\begin{equ}\label{eq:euler}
dX_t^{n}=b(X_{k_{n}(t)}^{n})\,dt+\,dW_t,\quad X_0^{n}=x_0^n,
\end{equ}
where $k_n(t)=\lfloor nt\rfloor/n$.
When $b$ is Lipschitz continuous, then the analysis of the scheme is quite standard. 
Going beyond (locally) Lipschitz $b$, the first result goes back to \cite{GyK}, who established convergence in probability (without rate) of the Euler-Maruyama scheme.
In the recently revived interest in quantifying the rate of convergence, there has been essentially two classes of approaches:
\begin{enumerate}
\item[(A)] In \cite{MP, NT5, NT8, NT_MathComp, PT, Bao1, Bao2, Huang, Mik2}
mild assumptions on the modulus of continuity of $b$ are imposed.
This usually means $\alpha$-H\"older continuity (although \cite{Bao2} also discusses the Dini continuous case). 
While $\alpha>0$ is allowed to be arbitrarily low, the drawback is that the convergence rates obtained become increasingly worse as $\alpha\to 0$. In \cite{PT} for example, whose setting is similar to ours, the rate of $L_2$-convergence is $\alpha/2$, which becomes negligible for small $\alpha$.
\item[(B)] In \cite{Szo1,Szo2,Szo3,LS5,Y2, MY4}
$b$ is allowed to have discontinuities on a `small' (lower dimensional) set. In dimension $d=1$, for example, the set of discontinuities is a discrete set of points.
Outside of this exceptional set the usual Lipschitz condition in assumed. Under this condition, rate $1/2$ is achieved for Euler-type schemes, and in the scalar case recently the improved rate $3/4$ was shown \cite{MY4} for a modified Milstein-type scheme.
\end{enumerate}
Our contribution is twofold:
\begin{itemize}
\item Note that the results for (A) seem to suggest that the rate of convergence is arbitrarily small for $\alpha$-H\"older drift $b$, when $\alpha$ is small.
Here we show that this is \emph{not} the case: in Theorem \ref{thm:main-theorem} we show that (almost) $1/2$ rate of convergence holds for all Dini continuous coefficients.
\item In dimension $1$, we unify the type of irregularities considered (A) and (B) in the following sense: on one hand the irregularities do not have to be restricted on exceptional sets (as in (A)), on the other hand they are allowed to be discontinuous (as in (B)).
More precisely, in Theorem \ref{thm:main-theorem2}
we prove that for any $\eps>0$, the Euler-Maruyama scheme has rate $1/2-\eps$ in $L_2$ for all bounded and measurable $b$.
To the best of our knowledge this is the first result on the rate of convergence (and as far as $L_2$ is concerned, even merely on convergence) without posing any continuity assumption whatsoever on $b$.
\end{itemize}
We now proceed by stating our main results. Recall the definition of Dini continuity: fix a continuous increasing function $\vartheta : [0, 1] \to [0, \infty)$  such that 
\begin{equ}\label{eq:dini-cond}
\int_0^1 \frac{\vartheta(r)}{r} \, dr < \infty.
\end{equ}
We then denote by $\mathcal{D}$ the space of all continuous functions  $f:\bR^d\to \bR$, such that 
\begin{equs}
\| f\|_{\mathcal{D}}:= 
\sup_{x\in\R^d}|f(x)|+ \sup_{\substack{x,y\in\R^d\\|x-y| \leq 1\\ x\neq y}}  \frac{ |f(x)-f(y)|}{\vartheta(|x-y|)} < \infty.
\end{equs}
For vector-valued $f$, $\|f\|_\cD$ denotes $\max_i\|f^i\|_\cD$ (and similarly for other norms);
since the context will always make it clear whether we mean scalar- or vector-valued functions, this abuse of notation will not cause any confusion.
The main results then read as follows. 
\begin{theorem}                        \label{thm:main-theorem}
Assume $b\in\cD$.
Let $\eps \in (0,1) $ and suppose that for some constant $C$ one has $\E|x_0^n-x_0|^2 \leq C n^{-1+\eps}$ for all $n \in \bN$.
Then  for all $n \in \bN$ one has the bound
\begin{equs}\label{eq:main bound}
\sup_{t \in[0,T]}\E |X_t^n-X_t|^2  \leq N n^{-1+\eps}
\end{equs}
with some $N=N(C,\eps,\vartheta,T,d,\|b\|_{\mathcal{D}})$.
\end{theorem}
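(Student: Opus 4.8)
\smallskip
\noindent\emph{Proof strategy.}
The idea is to combine a Zvonkin-type transformation of \eqref{eq:main}--\eqref{eq:euler} with a stochastic sewing argument that extracts the regularising effect of the noise more efficiently than a pointwise modulus-of-continuity estimate. Write $\E^s:=\E[\,\cdot\mid\cF_s]$, let $P_\rho$ denote the heat semigroup $P_\rho f(x)=\E f(x+\sqrt\rho\,Z)$ with $Z\sim\cN(0,I)$, and set $k:=k_n$. For $\lambda\geq1$ large, fix the componentwise solution $u:\R^d\to\R^d$ of $\lambda u-\tfrac12\Delta u-b\cdot\nabla u=b$; by Schauder theory for Dini-continuous coefficients $u\in C^2$ with $\nabla^2u$ bounded, and $\|\nabla u\|_\infty\leq1/2$ once $\lambda$ is large, so $\Phi:=\mathrm{id}+u$ is a bi-Lipschitz $C^2$-diffeomorphism with bi-Lipschitz inverse and $g:=\nabla\Phi\cdot b$ has a Dini modulus of continuity. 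Applying It\^o's formula to $\Phi(X_t)$ and $\Phi(X^n_t)$ and using the equation for $u$, one finds that $\Psi_t:=\Phi(X^n_t)-\Phi(X_t)$ solves
\begin{equs}
\Psi_t&=\Psi_0+\lambda\int_0^t\big(u(X^n_r)-u(X_r)\big)\,dr+\int_0^t\big(\nabla\Phi(X^n_r)-\nabla\Phi(X_r)\big)\,dW_r
\\&\qquad+\int_0^t\nabla\Phi(X^n_r)\big(b(X^n_{k(r)})-b(X^n_r)\big)\,dr.
\end{equs}

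\smallskip
\noindent\emph{Reduction.}
Since $\|\nabla u\|_\infty,\|\nabla^2\Phi\|_\infty<\infty$, the first of the three integrals on the right is $\lesssim\int_0^t\E|\Psi_r|^2\,dr$ in the $L_2$ sense (Cauchy--Schwarz) and the stochastic integral has second moment $\lesssim\int_0^t\E|\Psi_r|^2\,dr$ (It\^o isometry); both are absorbed by Gr\"onwall's lemma at the end. In the remaining integral, $\nabla\Phi(X^n_r)b(X^n_r)=g(X^n_r)$ while $\nabla\Phi(X^n_r)b(X^n_{k(r)})=g(X^n_{k(r)})+\big(\nabla\Phi(X^n_r)-\nabla\Phi(X^n_{k(r)})\big)b(X^n_{k(r)})$; since $|X^n_r-X^n_{k(r)}|\leq\|b\|_\infty n^{-1}+|W_r-W_{k(r)}|$, the second summand integrates to a term of $L_p$-norm $\lesssim\int_0^t\E^{1/p}|X^n_r-X^n_{k(r)}|^p\,dr\lesssim n^{-1/2}$, so the remaining integral equals $\cQ_t+O_{L_p}(n^{-1/2})$ with $\cQ_t:=\int_0^t\big(g(X^n_{k(r)})-g(X^n_r)\big)\,dr$. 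Granting $\|\cQ_t\|_{L_2}\lesssim n^{-1/2}$ (established below) and $\E|\Psi_0|^2\lesssim\E|x_0^n-x_0|^2\leq Cn^{-1+\eps}$, one obtains $\E|\Psi_t|^2\lesssim n^{-1+\eps}+\int_0^t\E|\Psi_r|^2\,dr$, and Gr\"onwall's inequality gives $\sup_t\E|\Psi_t|^2\lesssim n^{-1+\eps}$; since $\Phi^{-1}$ is Lipschitz this yields $\sup_t\E|X^n_t-X_t|^2\lesssim n^{-1+\eps}$, i.e.\ \eqref{eq:main bound}.

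\smallskip
\noindent\emph{The regularisation estimate.}
It remains to prove $\|\cQ_t\|_{L_p}\lesssim n^{-1/2}$ for some $p\geq2$. The key is the conditional bound
\begin{equs}\label{eq:cond}
\big\|\E^s\big[\cQ_t-\cQ_s\big]\big\|_{L_p}\lesssim n^{-1}+n^{-1/2}(t-s),\qquad 0\leq s\leq t\leq T.
\end{equs}
To prove it, change measure by Girsanov so that $X^n$ becomes a Brownian motion, the Radon--Nikodym density having all moments bounded in terms of $\|b\|_\infty$ and $T$; this identifies $\E^s[g(X^n_r)]$, up to corrections stemming from the piecewise-constant drift of the scheme (these contribute the $n^{-1/2}(t-s)$ term in \eqref{eq:cond}), with $(P_{r-s}g)(X^n_s)$. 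For $s\leq k(r)$ one then writes $(P_{r-s}g-P_{k(r)-s}g)(X^n_s)=\int_{k(r)-s}^{r-s}\tfrac12\Delta P_\rho g(X^n_s)\,d\rho$ and uses $\|\Delta P_\rho g\|_\infty\lesssim\rho^{-1}\kappa(\rho)$, where $\kappa$ is built from the modulus of continuity of $g$ and satisfies $\int_0^1\rho^{-1}\kappa(\rho)\,d\rho<\infty$ by \eqref{eq:dini-cond}; integrating over $r\in(s,t]$, using $r-k(r)\leq n^{-1}$ and treating the at most two mesh intervals adjacent to $s$ by the trivial bound, gives \eqref{eq:cond}. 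Together with the crude estimate $\|\cQ_t-\cQ_s\|_{L_p}\lesssim(t-s)$ and the stochastic sewing lemma, with the dyadic recursion stopped at the mesh scale $n^{-1}$ (so that the $n^{-1}$-part of \eqref{eq:cond} produces a martingale remainder of order $n^{-1/2}(t-s)^{1/2}$, and the linear part one of order $n^{-1/2}(t-s)$), this yields $\|\cQ_t\|_{L_p}\lesssim n^{-1/2}$.

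\smallskip
\noindent\emph{Main obstacle.}
The only non-routine step is \eqref{eq:cond}, and in particular the power $n^{-1}$, which must come from the single-step size $r-k(r)\leq n^{-1}$ and not from a modulus of continuity evaluated at the noise scale $n^{-1/2}$. Here the heat-semigroup smoothing is essential: it turns the one-step discretisation error $g(X^n_r)-g(X^n_{k(r)})$ (after conditioning) into a time increment of $P_\rho g$, whose singularity in $\rho$ is integrable precisely because $b$---hence $g$---is Dini continuous. The naive pointwise bound $|g(X^n_{k(r)})-g(X^n_r)|\lesssim\vartheta(|X^n_r-X^n_{k(r)}|)$ would only give rate $\vartheta(n^{-1/2})$, i.e.\ $n^{-\alpha/2}$ in the $\alpha$-H\"older case---the rate obtained in the existing literature---so the whole point is to avoid it. (Alternatively, one can dispense with the PDE for $u$ and treat the term $\int_0^t(b(X^n_r)-b(X_r))\,dr$ itself by a quantitative Gy\"ongy--Krylov/sewing argument, at the cost of a more delicate Gr\"onwall step.)
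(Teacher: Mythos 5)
Your overall architecture coincides with the paper's: a Zvonkin-type transformation (you use the elliptic resolvent equation with large $\lambda$; the paper uses the parabolic backward equation on a short time horizon and then iterates in time --- an inessential difference), reduction of the error to a single quadrature functional $\int_0^t\big(g(X^n_{k(r)})-g(X^n_r)\big)\,dr$ plus Gr\"onwall-absorbable terms, and a final Gr\"onwall step. Where you genuinely diverge is in the proof of the quadrature bound. The paper proves the unconditional estimate $\E\big|\int_\tau^{\tau'}(f(W_s)-f(W_{k_n(s)}))\,ds\big|^2\leq N(\sup|f|^2)\,n^{-1}\log(n+1)$ for a genuine \emph{Brownian motion} and \emph{arbitrary bounded measurable} $f$, by expanding the square and integrating by parts against the explicit Gaussian density (Lemma \ref{lem:main-lemma}); it then transfers this to $X^n$ by Girsanov combined with H\"older's inequality on the density, which is where the $\eps$ in the exponent is lost (Lemma \ref{lem:main-lemma-2}). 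No heat-semigroup smoothing and no continuity of $f$ enter at this stage; Dini continuity is used only for the Kolmogorov equation. Your replacement of this computation by a stochastic-sewing argument is a legitimate alternative in principle, but as written it has a genuine gap.

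The gap is the step where you ``identify $\E^s[g(X^n_r)]$ with $(P_{r-s}g)(X^n_s)$ up to corrections of order $n^{-1/2}(t-s)$'' by changing measure. Girsanov's theorem does not commute with conditioning: under $\bP$ one has, by Bayes' rule, $\E[Z\mid\cF_s]=\E^{\bP^n}[Z\,(\rho_n^{-1}/\E^{\bP^n}[\rho_n^{-1}\mid\cF_s])\mid\cF_s]$, with the density ratio \emph{inside} the conditional expectation; the clean identity $\E^{\bP^n}[g(X^n_r)\mid\cF_s]=(P_{r-s}g)(X^n_s)$ therefore says nothing directly about $\E^s[g(X^n_r)]$, and the claimed size of the correction is precisely what would need to be proved. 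Since your conditional bound $\|\E^s[\cQ_t-\cQ_s]\|_{L_p}\lesssim n^{-1}+n^{-1/2}(t-s)$ is the heart of the argument, this is not a cosmetic omission. Two repairs exist: (i) prove the quadrature bound for an actual Brownian motion first (where your heat-semigroup computation is valid) and then transfer the $L_p$-norm of the whole integral to $X^n$ via Girsanov and H\"older, accepting the $\eps$-loss, exactly as the paper does; or (ii) keep the drifted process and expand the drift by iterated conditioning at grid points, which requires additional gradient bounds on $P_\rho g$ and bookkeeping you have not supplied. A secondary issue: the hypothesis $n^{-1}+n^{-1/2}(t-s)$ does not fit the standard stochastic sewing lemma (which requires a bound of order $(t-s)^{1+\delta}$), and the ``recursion stopped at the mesh scale'' must be set up so that no conditioning ever happens at the left endpoint of the same mesh interval --- otherwise the bottom level of the recursion produces terms of size $\vartheta(n^{-1/2})(t_{i+1}-t_i)$ and the naive rate you are trying to avoid reappears. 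This forces the germ to excise the one or two mesh intervals adjacent to $s$, a point your sketch does not address.
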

Let us mention that the generality of allowing different initial condition $x_0^n$ of the discretisation than that of \eqref{eq:main} is a convenient setup for the proof, see Section \ref{sec:proof}. In the next theorem we consider the scalar case where even the assumption of Dini continuity can be lifted.
\begin{theorem}                     \label{thm:main-theorem2}
Assume  $d=1$, and let $b:\R\to\R$ be a bounded measurable function. Let $\eps \in (0,1) $  and suppose that for some constant $C$ one has $\E|x_0^n-x_0|^2 \leq C n^{-1+\eps}$ for all $n \in \bN$.
Then  for all $n \in \bN$ one has the bound
\begin{equs}\label{eq:main bound2}
\sup_{t \in[0,T]}\E |X_t^n-X_t|^2  \leq N n^{-1+\eps}
\end{equs}
with some $N=N(C,\eps,T,\sup|b|)$.
\end{theorem}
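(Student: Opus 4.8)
The plan is to reduce Theorem \ref{thm:main-theorem2} to an estimate of the same type as Theorem \ref{thm:main-theorem}, by a regularisation argument that exploits the one-dimensional structure. The key point is that a bounded measurable $b:\R\to\R$ can be mollified: set $b_\delta=b*\rho_\delta$ with a standard mollifier at scale $\delta>0$. Then $b_\delta$ is smooth (in particular Dini continuous, with $\|b_\delta\|_\cD$ depending on $\delta$), $\sup|b_\delta|\le\sup|b|$, and $b_\delta\to b$ in $L^p_{loc}$ for every finite $p$. Let $X^{n,\delta}$ and $X^\delta$ denote, respectively, the Euler-Maruyama scheme and the true SDE solution with drift $b_\delta$ (same noise $W$, same initial data). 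I would estimate
\begin{equs}
\sup_{t\in[0,T]}\E|X^n_t-X_t|^2
\lesssim \sup_{t\in[0,T]}\E|X^n_t-X^{n,\delta}_t|^2
+ \sup_{t\in[0,T]}\E|X^{n,\delta}_t-X^\delta_t|^2
+ \sup_{t\in[0,T]}\E|X^\delta_t-X_t|^2 .
\end{equs}
The middle term is controlled by $Nn^{-1+\eps}$ via Theorem \ref{thm:main-theorem}, but with $N$ depending on $\|b_\delta\|_\cD$, hence blowing up as $\delta\to0$; the first and third terms should vanish as $\delta\to0$ uniformly in $n$. One then picks $\delta=\delta(n)\to0$ slowly enough that the middle term stays $\le Nn^{-1+\eps}$ while the other two terms are also $\le Nn^{-1+\eps}$.

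The substantive work is in bounding the first and third terms. For the third term, $\sup_t\E|X^\delta_t-X_t|^2$, one uses a Zvonkin/Itô–Tanaka transformation: the PDE $\tfrac12 u''+b u'=\lambda u-b$ (componentwise in general, but here $d=1$) has a solution $u=u_\lambda$ with $\|u\|_{C^1}$ small for $\lambda$ large, and $\Phi(x)=x+u(x)$ is a bi-Lipschitz change of variables turning \eqref{eq:main} into an SDE with Lipschitz-type coefficients. Stability of this transformed equation under perturbing $b\to b_\delta$ (equivalently $u\to u_\delta$, with $\|u-u_\delta\|_{C^1}\to0$ because $b_\delta\to b$ in, say, $L^q$ with $q$ large — this is where both $d=1$ and boundedness of $b$ are used, as the relevant PDE estimates need $b$ only in $L^q$) yields $\sup_t\E|X^\delta_t-X_t|^2\le \omega(\delta)$ for some modulus $\omega$ independent of $n$. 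For the first term, $\sup_t\E|X^n_t-X^{n,\delta}_t|^2$, I expect a more hands-on argument: writing the difference as $\int_0^t\big(b(X^n_{k_n(s)})-b_\delta(X^{n,\delta}_{k_n(s)})\big)\,ds$, one splits into $b(X^n_{k_n})-b(X^{n,\delta}_{k_n})$ and $b(X^{n,\delta}_{k_n})-b_\delta(X^{n,\delta}_{k_n})$; the second piece is handled by a stochastic-sewing or occupation-time bound showing that $\E\big|\int_0^t (b-b_\delta)(X^{n,\delta}_{k_n(s)})\,ds\big|^2$ is small because $X^{n,\delta}_{k_n(s)}$, being close to a nondegenerate diffusion, has a density with good bounds uniformly in $n$ and $\delta$ (using $\sup|b_\delta|\le\sup|b|$), so integrating $|b-b_\delta|$ against it gives a quantity controlled by $\|b-b_\delta\|_{L^1}\to0$; the first piece is absorbed by Gronwall after using that $b$ is bounded (to control increments crudely) together with the same density bound.

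The main obstacle I anticipate is the first term: one cannot use Lipschitz continuity of $b$ to close a Gronwall estimate for $|X^n-X^{n,\delta}|$, since $b$ is merely bounded measurable. The resolution is to again pass through the Zvonkin transform at the level of the discrete schemes — but the scheme is not a genuine solution of the transformed equation, only an approximate one, and tracking the discretisation error through $\Phi$ and $\Phi^{-1}$ while keeping constants independent of $\delta$ is delicate. This is essentially the technical heart of the paper, and I expect it to rely on the same stochastic sewing / quadrature estimates developed for Theorem \ref{thm:main-theorem}, applied now with $b_\delta$ in place of $b$ but with the bounds re-derived so that they depend on $\sup|b_\delta|\le\sup|b|$ rather than on $\|b_\delta\|_\cD$. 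A clean way to organise this is to prove a single proposition: for bounded measurable $b$ and its mollifications, $\sup_t\E|X^n_t-X^{n,\delta}_t|^2 + \sup_t\E|X^\delta_t-X_t|^2 \le C(\sup|b|,T)\,\omega(\delta)$ with $\omega(\delta)\to0$ and $C$ independent of $n$, after which the diagonal choice $\delta=\delta(n)$ finishes the proof.
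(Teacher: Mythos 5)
Your strategy has a structural flaw that prevents it from yielding the claimed rate, independently of whether the individual steps could be carried out. In the decomposition
$\E|X^n-X|^2 \lesssim \E|X^n-X^{n,\delta}|^2+\E|X^{n,\delta}-X^\delta|^2+\E|X^\delta-X|^2$,
the middle term costs $N(\|b_\delta\|_{\mathcal D})\,n^{-1+\eps}$ with a constant that blows up as $\delta\to 0$, while the outer two terms are bounded by some modulus $\omega(\delta)$ measuring, ultimately, a quantity like $\|b-b_\delta\|_{L^1(\mathrm{compact})}$ or $\|u-u_\delta\|_{C^1}$. For a general bounded measurable $b$ there is \emph{no rate} for $\omega(\delta)$: mollifications of an arbitrary $L^\infty$ function converge with arbitrarily slow speed. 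Hence any diagonal choice $\delta=\delta(n)$ forces a trade-off in which the final bound is dominated by $\omega(\delta(n))$, which tends to zero with no polynomial rate. Your scheme could at best recover convergence without rate (in the spirit of Gy\"ongy--Krylov), whereas the theorem asserts rate $n^{-1+\eps}$ with a constant depending only on $\sup|b|$. To get that, every constant in the argument must depend on $b$ only through $\sup|b|$, and no term of size $\omega(\delta)$ may appear.

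The paper's proof avoids mollification of $b$ altogether. In $d=1$ one has the explicit scale function $\phi_z(x)=\int_0^x\exp(-2\int_z^r \one_{|z-s|\le 2}b(s)\,ds)\,dr$, which solves $\tfrac12\phi_z''+b\phi_z'=0$ a.e.\ on $[z-2,z+2]$ and is bi-Lipschitz with $W^{2,\infty}$ bounds depending \emph{only} on $\sup|b|$ --- no regularity of $b$ enters. Localising with stopping times $\tau_m$ (successive times at which $X$ moves by $1$) and applying It\^o's formula to $\phi_{X_{\tau_m}}(X_t)$ kills the drift exactly; the transformed equation has a Lipschitz diffusion coefficient $\phi'\circ\psi$, and the comparison with the scheme reduces to the quadrature term $\int(f(X^n_s)-f(X^n_{k_n(s)}))\,ds$ with $f$ merely bounded, which is exactly what Lemma \ref{lem:main-lemma-2} (Girsanov plus the Davie-type estimate of Lemma \ref{lem:main-lemma}) controls at rate $n^{-1+\eps}$ with constant depending only on $\sup|b|$ and $\sup|f|$. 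Your instinct to invoke a Zvonkin-type transform is pointed in the right direction, but it must be applied to the original $b$ (exploiting that in $d=1$ the relevant ODE/PDE is explicitly solvable for bounded measurable coefficients), not used merely as a stability device for comparing $b$ with $b_\delta$.
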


\begin{remark}\label{rem:slow}
The choice of looking at $L_2$-convergence is also motivated by recent results \cite{HHJ, JMY, GJS, Y} that show that in $L_2$
very slow convergence rate(in particular, worse than any polynomial) of arbitrary approximation schemes may occur even in cases when the rate of convergence in probability is known to be $1/2$ \cite{Gy}.
Our results can also be seen as extending the class of equations where the slow convergence phenomena can \emph{not} happen.
\end{remark}

\begin{remark}
A few months after the current work was available on the ArXiv,
the article \cite{Neuenkirch} also appeared on the ArXiv.
In the one-dimensional setting the authors show an interesting complementary result to Theorem \ref{thm:main-theorem2}: if in addition to our assumption $b$ possesses regularity of order $\alpha \in (0,1)$ in a Sobolev-Slobodecki scale, then the rate of convergence also improves.
The technique in \cite{Neuenkirch} is essentially the same as the one of the present work in $d=1$: after a transformation by the scale function \eqref{eq:Zvonkin PDE} a key estimate of the type \eqref{eq:averaging 2} leads to the result; illustrating that, the method is reasonably robust.
 We would also like to thank the authors of \cite{Neuenkirch} for bringing to our knowledge that a time-homogeneous version of the estimate \eqref{eq:main-estimate} below has appeared in \cite{Altmeyer}.
\end{remark}

\section{Auxiliary Results}

\subsection{Quadrature bounds}
In this section we formulate some lemmata that will be used in the proofs of Theorems \ref{thm:main-theorem}-\ref{thm:main-theorem2}.
In the proofs the constants $N$ may change from line to line.
Our first lemma is similar in spirit to \cite[Proposition 2.1]{Davie},
which is the "basic estimate" in Davie's proof of the path-by-path uniqueness
of the solution of \eqref{eq:main}.
A time-homogeneous variant of the estimate below without stopping times and further random dependence also recently appeared in \cite{Altmeyer}, following a different proof.
 In the following, $\mathcal{Y}$ is a Polish space. 
 \begin{lemma}          \label{lem:main-lemma}
There exists a constant $N=N(T,d)$ such that for all stopping times $\tau \leq \tau' \leq T$, all $\mathcal{F}_{\tau}$-measurable $\mathcal{Y}$-valued random variables $Y$,  all 
bounded measurable functions $f:[0,T] \times \R^d\times \mathcal{Y} \to\R$, and all $n \in \mathbb{N}$,  one has the bound
\begin{equation}       \label{eq:main-estimate} 
\E \left|\int_{\tau}^{\tau'} \left( f(s,W_s, Y)- f(s, W_{k_n(s)}, Y ) \right) \, ds \right|^2 \leq N \big(\sup|f|^2\big) n^{-1} \log (n+1).
\end{equation}

\end{lemma}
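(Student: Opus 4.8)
The plan is to expand the square and reduce the estimate to a double integral over the region $\{\tau \le s \le t \le \tau'\}$, then exploit the Markov property of Brownian motion together with Gaussian density bounds. Writing $g(s) := f(s,W_s,Y) - f(s,W_{k_n(s)},Y)$, one has
\begin{equs}
\E\Big|\int_\tau^{\tau'} g(s)\,ds\Big|^2 = 2\,\E\int_\tau^{\tau'}\!\!\int_\tau^t g(s)g(t)\,ds\,dt \le 2\int_0^T\!\!\int_0^t \big|\E\big[g(s)g(t)\one_{s\ge\tau}\one_{t\le\tau'}\big]\big|\,ds\,dt.
\end{equs}
The crucial observation is that for fixed $s$, conditioning on $\mathcal{F}_{s}$ (which contains $Y$, the events $\{\tau\le s\}$, $\{\tau'\ge s\}$, and $W_s, W_{k_n(s)}$), the inner factor $\E[g(t)\mid\mathcal{F}_s]$ can be written using the Gaussian transition density: on $\{s \le k_n(t)\}$ it equals $\int (f(t,z,Y)-f(t,W_{k_n(t)},Y))\,p_{t-s}(z-W_s)\,dz$ where $W_{k_n(t)}$ is itself integrated against $p_{k_n(t)-s}$, and a further conditioning lets us difference the two heat kernels. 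The key analytic input is the standard bound $\int_{\R^d}|p_{t-s}(z-w) - p_{k_n(t)-s}(z-w)|\,dz \le N\,(t-k_n(t))/(k_n(t)-s) \le N n^{-1}/(k_n(t)-s)$, valid when $k_n(t) > s$; this produces a factor that, after multiplying by $\sup|f|^2$, is integrable in $t$ down to $k_n(t)\approx s$ up to the logarithmic loss.

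More precisely, I would split the $t$-integral at $k_n(t) = s$: the region where $k_n(t) \le s < t$ has length $O(n^{-1})$ in $t$ for each $s$ and contributes $O(n^{-1})\sup|f|^2$ directly (here one just bounds $|g(s)g(t)| \le 4\sup|f|^2$). On the complementary region $k_n(t) > s$, one applies the heat-kernel difference bound above to get a pointwise estimate $|\E[g(s)g(t)\one_{\{\cdots\}}]| \le N\sup|f|^2 \cdot n^{-1} (k_n(t)-s)^{-1}$; integrating $ds$ over $[0,t]$ and $dt$ over $[0,T]$ of $(k_n(t)-s)^{-1}$ yields the $\log(n+1)$ factor (the singularity at $s = k_n(t)$ is integrable, and summing the resulting $\sum 1/k$ over the $O(n)$ time-steps gives $\log n$). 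Combining the two regions gives the claimed $N\sup|f|^2\, n^{-1}\log(n+1)$.

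The main obstacle is bookkeeping the stopping times and the extra randomness $Y$ cleanly: one must make sure that conditioning on $\mathcal{F}_s$ (or on the relevant $\sigma$-algebra generated by $W_{k_n(s)}, W_s, Y$ and the stopping-time events) legitimately freezes $Y$ and the indicators while leaving the increments of $W$ after time $s$ independent and Gaussian. Using $\mathcal{F}_\tau$-measurability of $Y$ and $\tau \le \tau' \le T$, the events $\{\tau \le s\}$ and $\{\tau' \ge t\} \supseteq$-type truncations are $\mathcal{F}_s$- and $\mathcal{F}_t$-measurable respectively, so the tower property applies at each stage; the only care needed is to condition in the right order (first on $\mathcal{F}_s$ for the outer $g(s)$ factor, then a further conditioning on $\mathcal{F}_{k_n(t)}$ inside to difference the kernels). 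Once this is set up, the remaining steps are the routine Gaussian estimates and the harmonic-sum computation sketched above.
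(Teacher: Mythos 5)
Your core mechanism for the region $s<k_n(t)$ --- conditioning on $\mathcal{F}_s$ and controlling $\E[f(t,W_t,Y)-f(t,W_{k_n(t)},Y)\mid\mathcal{F}_s]$ by the $L_1$-distance of the two heat kernels --- is sound, and it is genuinely different from (and in some respects lighter than) the paper's argument, which writes the product of the two differences via the fundamental theorem of calculus, integrates by parts against the explicit joint Gaussian density of four increments, and then needs an induction on the dimension plus a mollification step to remove a smoothness assumption on $f$; your kernel-difference bound is dimension-free and needs no smoothing. However, there is a genuine gap in your treatment of the random \emph{upper} limit $\tau'$. After inserting the indicators you must compute $\E[\,g(t)\one_{\tau'\ge t}\mid\mathcal{F}_s\,]$ with $s<t$, and the event $\{\tau'\ge t\}$ is \emph{not} $\mathcal{F}_s$-measurable (its $\mathcal{F}_t$-measurability is irrelevant here: conditioning on $\mathcal{F}_t$ trivializes the whole expression, since $g(s)g(t)\one_{\tau\le s}\one_{\tau'\ge t}$ is already $\mathcal{F}_t$-measurable). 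The indicator is correlated with the increments of $W$ on $[s,t]$, so it cannot be pulled out of the conditional expectation, and the cancellation between the two transition kernels is destroyed: one would be integrating $f$ against two path-dependent sub-probability kernels whose difference is no longer controlled by $n^{-1}(k_n(t)-s)^{-1}$. The standard repair is exactly the paper's final step: first prove the estimate for $\E|\int_\sigma^T(\cdots)\,ds|^2$ with a single stopping time $\sigma$ as lower limit (there the only indicator $\one_{\sigma\le s}$ \emph{is} $\mathcal{F}_s$-measurable, and so is $Y\one_{\sigma\le s}$, so your conditioning goes through), and then deduce \eqref{eq:main-estimate} from $\int_\tau^{\tau'}=\int_\tau^T-\int_{\tau'}^T$, noting that $Y$ is also $\mathcal{F}_{\tau'}$-measurable.

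A second, smaller point: with the splitting at $\{k_n(t)\le s\}$ versus $\{k_n(t)>s\}$, the bound $n^{-1}(k_n(t)-s)^{-1}$ is \emph{not} integrable in $s$ up to $s=k_n(t)$, contrary to your claim. Either enlarge the trivial region to $\{s\ge k_n(t)-1/n\}$ (still of measure $O(n^{-1})$ for each $t$), so that the remaining integral is $\int_0^{k_n(t)-1/n}(k_n(t)-s)^{-1}\,ds\le\log(nT)$, or keep the exact bound $\log\left(\tfrac{t-s}{k_n(t)-s}\right)$ for the $L_1$-difference of the kernels, which is integrable near $s=k_n(t)$ and still yields $n^{-1}\log(n+1)$ after integration. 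With these two corrections your argument gives a valid alternative proof.
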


\begin{proof}
Introduce the notation $W_{s,t}=W_t-W_s$.
We will first show that for $f: [0,T] \times \bR^d \to \bR$  and for $\alpha \in [0, T-2/n]$ we have the bound 
\begin{equation}       \label{eq:first-step. } 
2I_n:= \E \left|\int_{\alpha+1/n}^{T} \left( f(s,W_{\alpha,s})- f(s, W_{\alpha,k_n(s)}) \right) \, ds \right|^2 \leq N \big(\sup|f|^2\big) n^{-1} \log (n+1).
\end{equation}
We make two simplifying assumptions that will be removed later on: (i) $f(t,  \cdot)$ is compactly supported and twice continuously differentiable for all $t \in [0,T]$; (ii) $d=1$.
Notice that 
\begin{equs}              \label{eq:suf}
2I_n=\E \left|\int_{\alpha+1/n}^{T} \left( f(s,W_{s-\alpha})- f(s, W_{k_n(s)-\alpha}) \right) \, ds \right|^2,
\end{equs}
which gives,
\begin{equs}
I_n&=\E\int_{\alpha+1/n}^T \int_{\alpha+1/n}^t\left( f(s,W_{s-\alpha})- f(s, W_{k_n(s)-\alpha})\right)
\left( f(t, W_{t-\alpha})- f(t,W_{k_n(t)-\alpha})\right)\,ds\,dt
\\
&=\int_{\alpha+2/n}^{T}\int_{\alpha+1/n}^{k_n(t)-1/n}F_n(s,t) \,ds\,dt
\\
&\quad + \int_{\alpha+2/n}^{T}\int_{k_n(t)-1/n}^{t} F_n(s,t)\,ds\,dt
 + \int_{\alpha+1/n}^{\alpha+2/n}\int_{\alpha+1/n}^{t}F_n(s,t)\, ds \, dt 
\\  \label{eq:Ins}  
&=: I_n^1+I_n^2+I_n^3,
\end{equs}
where we have used the notation 
\begin{equs}
F_n(s,t):= \E \big[ \left( f(s,W_{s-\alpha})- f(s, W_{k_n(s)-\alpha})\right)
\left( f(t, W_{t-\alpha})- f(t,W_{k_n(t)-\alpha})\right)\big] .
\end{equs}
We deal first with $I_n^1$. We will show that for $(s,t)$ as in the integrand of $I_n^1$ we have the bound   
\begin{equs}             \label{eq:F1-F2}
 | F_n(s, t)| \leq  N n^{-1}(\sup|f|^2) \big( \big((k_n(s)-\alpha)(k_n(t)-s)\big)^{-1/2}+ (k_n(t)-s)^{-1}\big).
\end{equs}
By  the fundamental theorem of calculus
\begin{equs}
 F_n(s, t)=&  \E \int_0^1 \int_0^1 (W_{s-\alpha}-W_{k_n(s)-\alpha})f'(s, W_{s-\alpha}- \theta(W_{s-\alpha} -W_{k_n(s)-\alpha}) ) 
\\
& \qquad \times  (W_{t-\alpha}-W_{k_n(t)-\alpha})f'(t, W_{t-\alpha}- \lambda (W_{t-\alpha}-W_{k_n(t)-\alpha}) ) \,  ds\, dt\,  d\theta\, d \lambda
\end{equs}
Notice that for $s< k_n(t)$, we have 
$k_n(s)\leq s<k_n(t)\leq t$.
We can express the integrand in terms of random variables whose joint density is relatively simple:
Setting
\begin{equ}
Y_1:=W_{k_n(s)-\alpha},\ Y_2:=W_{s-\alpha}-W_{k_n(s)-\alpha},\ Y_3:=W_{k_n(t)-\alpha}-W_{s-\alpha},\ Y_4:=W_{t-\alpha}-W_{k_n(t)-\alpha}
\end{equ}
and introducing the notations, for $x=(x_1,x_2,x_3,x_4) \in \bR^4$,
\begin{equs}
\bar{f}(s,x)=f(s,(1-\theta)x_2+x_1), \qquad \tilde{f}(t,x)=f(t,(1- \lambda)x_4+x_3+x_2+x_1),
\end{equs}
we can write 
\begin{equs}
f' (s,W_{s-\alpha}- \theta(W_{s-\alpha}-W_{k_n(s)-\alpha}) )&= \bar f_{x_1}(s,Y),\\
f^\prime (t,W_{t-\alpha}- \lambda(W_{t-\alpha}-W_{k_n(t)-\alpha}) )&= \tilde f_{x_3}(s,Y).
\end{equs}
Moreover $Y_i=Y_i(s,t)$ are independent, Gaussian, with mean $0$ and variance $\sigma_i^2=\sigma_i^2(s,t)$, where
\begin{equs}
\sigma^2_1(s,t)= k_n(s)-\alpha, \ \sigma^2_2(s,t)= s- k_n(s), \ \sigma^2_3(s,t)= k_n(t)-s, \ \sigma^2_4(s,t)=t- k_n(t).
\end{equs}
Their joint density is therefore given by
\begin{equs}
\rho(s,t,x)= \exp\left(-\sum_{i=1}^4 \frac{|x_i|^2}{2\sigma^2_i(s,t)}\right) \frac{1}{(2\pi)^{2} \prod_{i=1}^4 \sigma_i(s,t) },
\end{equs}
for $x=(x_1,x_2,x_3,x_4) \in \bR^4$.
Hence we can write
\begin{equs}
 F_n(s, t)= \int_0^1\int_0^1\int_{\bR^4}  x_4 x_2 \bar{f}_{x_1}(s,x)\tilde{f}_{x_3}(t,x) \rho(s,t,x)\, dx   d\theta d \lambda,
\end{equs}
where 
\begin{equs}
\bar{f}(s,x)=f(s,(1-\theta)x_2+x_1), \qquad \tilde{f}(t,x)=f(t,(1- \lambda)x_4+x_3+x_2+x_1).
\end{equs}
After integration by parts with respect to $x_1$ and $x_3$ we get
\begin{equs}
 F_n(s, t)=& \int_0^1 \int_0^1\int_{\bR^4}  x_4 x_2 \bar{f}(s,x)\tilde{f}(t,x) \rho_{x_1x_3}(s,t,x)\, dx  d\theta d \lambda
\\
+ & \int_0^1 \int_0^1\int_{\bR^4}  x_4 x_2 \bar{f}(s,x)\tilde{f}_{x_1}(t,x) \rho_{x_3}(s,t,x)\, dx  d\theta d \lambda
\\
=&: F^1_n(s,t)+F_n^2(s,t).
\end{equs}
By explicitly differentiating the Gaussian density $\rho$, we get 
\begin{equs}
| F^1_n(s,t)| & = \big| \int_0^1 \int_0^1   \int_{\bR^4}  x_4 x_2 \bar{f}(s,x)\tilde{f}(t,x)  \rho(s,t,x) x_1 x_3 
 \sigma_1^{-2}(s,t)\sigma_3^{-2}(s,t)\, dx  d\theta d \lambda\big|
\\
& \leq (\sup|f|^2) \left(  \prod_{i-1}^4\E |Y_i(s,t)|\right)  \sigma_1^{-2}(s,t)\sigma_3^{-2}(s,t)
\end{equs}
Therefore, since $\E|Y_i(s,t)|\leq \sigma_i(s,t)$ and $\sigma_2(s,t),\sigma_4(s,t)\leq n^{-1/2}$, we can write
\begin{equs}   \label{eq:F1} 
| F^1_n(s,t)| \leq N n^{-1}K^2 \big((k_n(s)-\alpha)(k_n(t)-s)\big)^{-1/2}.
\end{equs}
For $F^2_n(s, t)$, notice that $\tilde f_{x_1}=\tilde f_{x_3}$, so we can integrate by parts with respect to $x_3$ again to get
\begin{equs}
|F^2_n(s, t)|=& \big|\int_0^1 \int_0^1 \int_{\bR^4}  x_4 x_2 \bar{f}(s,x)\tilde{f}(t,x) \rho_{x_3x_3}(s,t,x)\, dx  d\theta d \lambda\big|
\\
\leq &\big|\int_0^1 \int_0^1 \int_{\bR^4}  x_4 x_2 \bar{f}(s,x)\tilde{f}(t,x) |x_3|^2 \sigma^{-4}_3(s,t)\rho(s,t,x)\, dx  d\theta d \lambda\big|
\\
& +\big| \int_0^1 \int_0^1\int_{\bR^4}  x_4 x_2 \bar{f}(s,x)\tilde{f}(t,x) \sigma^{-2}_3(s,t)\rho(s,t,x)\, dx  d\theta d \lambda\big|
\\
 \leq  & N n^{-1}(\sup|f|^2) (k_n(t)-s)^{-1},
\end{equs}
which combined with \eqref{eq:F1} shows \eqref{eq:F1-F2}.
Consequently, we have
\begin{equs}
I_n^1\leq  N n^{-1}K^2 \int_{\alpha+2/n}^{T}\int_{\alpha+1/n}^{k_n(t)-1/n}
  \big( \big((k_n(s)-\alpha)(k_n(t)-s)\big)^{-1/2}+ (k_n(t)-s)^{-1}\big) \,ds\,dt.
\end{equs}
Note that one has $k_n(s)-\alpha \geq s-\alpha -1/n$ and that by a change of variables one sees
\begin{equ}
\int_{
\alpha+1/n}^{k_n(t)} \big((s-\alpha -n^{-1})(k_n(t)-s)\big)^{-1/2}\,ds=\int_0^1\big(s(1-s)\big)^{-1/2}\,ds=\pi.
\end{equ}
Next, we see that
\begin{equs}
  \int_{\alpha+2/n}^T \int_{\alpha +1/n}^{k_n(t)-1/n} (k_n(t)-s)^{-1} ds dt
& =   \int_{\alpha+2/n}^T \log (k_n(t)-\alpha -n^{-1})- \log (n^{-1}) \, dt 
 \\
&  \leq N  \log (n+1).
\end{equs}
Therefore, we get 
\begin{equs}         \label{eq:I1}
|I^{1}_n| \leq N  K^2 n^{-1}\log (n+1).
\end{equs}
Both $I^2_n$ and $I^3_n$ are integrals over domains whose size is bounded by $2Tn^{-1}$ with an integrand that is bounded by $\sup|f|^2$. 
Consequently, 
\begin{equs}  \label{eq:I2}
|I^2_n|+|I^3_n| \leq N  n^{-1}(\sup|f|^2).
\end{equs}
Combining  \eqref{eq:Ins} with \eqref{eq:I1}-\eqref{eq:I2} we obtain \eqref{eq:first-step. }  under the additional assumptions (i)-(ii). We proceed by removing first the additional assumption (ii).  Suppose now that we have \eqref{eq:first-step. }  for $d=k$ and let   $W=(W^1, . . ., W^{k+1})$ be a $k+1$-dimensional Wiener process.
Setting $\tilde{W}=(W^2,..., W^{k+1})$,
we have 
\begin{equs}       
  \E  \big| &|\int_{\alpha+{1/n}}^T  \big( f(s, W_{\alpha,s})- f(s, W_{\alpha,k_n(s)}) \big) \, ds \big|^2
\\
& =  \E   \big|\int_{\alpha+{1/n}}^T \big( f(s,  W^1_{\alpha,s}, \tilde{W}_{\alpha,s})- f(s,  W^1_{\alpha,k_n(s)}, \tilde{W}_{\alpha,k_n(s)}) \big) \, ds \big|^2
\\
& \leq   \E  \Big(  \E \Big[ \big|\int_{\alpha+{1/n}}^T \big( f(s, W^1_{\alpha,s} , \tilde{W}_{\alpha,s} )
 - f(s,  W^1_{\alpha,s} , \tilde{W}_{\alpha,k_n(s)} ) \big) \, ds \big|^2 \Big| W^1 \Big]\Big)
\\
&\quad 
+ \E  \Big(  \E \Big[ \big|\int_{\alpha+{1/n}}^T \big( f(s, W^1_{\alpha,s} , \tilde{W}_{\alpha,k_n(s)} )
 - f(s,  W^1_{\alpha,k_n(s)} , \tilde{W}_{\alpha,k_n(s)} ) \big) \, ds \big|^2 \Big| \tilde W \Big]\Big)
\\
& \leq N (\sup|f|^2) n^{-1} \log (n+1),
\end{equs}
where we have used the fact that $W^1, \tilde{W}$ are independent and the induction hypothesis.

Removing the additional regularity assumption (i) on $f$ follows from a standard approximation argument. For a bounded measurable $f$ take a sequence $(f_m)_{m\in\N}$ satisfying (i), such that for each $t \in [0,T]$,  $f_m(t,  \cdot)\to f(t,  \cdot)$ as $m\to \infty$ almost everywhere on $\bR^d$, and $\sup|f_m|\leq\sup|f|$ (one can construct such $f_m$ by, for example, mollification).
Since for positive times the law of the Brownian motion is absolutely continuous with respect to the Lebesgue measure, we have that for all $s\in [\alpha+2/n, T] $, $f_m(s,W_s-W_\alpha)\to f(s,W_s-W_\alpha)$  and $f_m(s,W_{\kappa_n(s)}-W_\alpha)\to f(s,W_{\kappa_n(s)}-W_\alpha)$ almost surely. Let $\alpha \in [0,T-2/n]$.  By the first part of the proof we have the inequality
\begin{equs}
\E \left|\int_{\alpha + 2/n}^T \left( f_m(s,W_s-W_\alpha)- f_m(s,W_{k_n(s)}-W_\alpha) \right) \, ds \right|^2 &\leq N \sup|f_m|^2 n^{-1} \log (n+1)
\\&\leq N \sup|f|^2 n^{-1} \log (n+1),
\end{equs}
 and  by Fatou's lemma we get 
\begin{equs}
\E \left|\int_{\alpha+2/n}^T \left( f(s,W_s-W_\alpha)- f(s,W_{k_n(s)}-W_\alpha) \right) \, ds \right|^2 &\leq N (\sup|f|^2) n^{-1} \log (n+1).
\end{equs}
It then immediately follows that for any $\alpha\in[0,T]$ we have the bound
\begin{equs}        
\E \left|\int_{\alpha}^T \left( f(s,W_s-W_\alpha)- f(s,W_{k_n(s)}-W_\alpha) \right) \, ds \right|^2 &\leq N (\sup|f|^2) n^{-1} \log (n+1).
 \label{eq:estimate-alpha}
\end{equs}
Now let us consider a simple stopping time $\tau \leq T$, that is, one that takes only finitely many values $\{ \alpha_1, ..., \alpha_m\}$. Take an $\mathcal{F}_\tau$-measurable random variable $Y$, and a measurable bounded function $f : [0,T] \times \bR^d \times \mathcal{Y} \to \bR$. 
We have 
\begin{equs}
 \E \Big|\int_{\tau }^T &\big( f(s,W_s, Y)- f(s,W_{k_n(s)}, Y) \big) \, ds\Big|^2 
\\
= &  \sum_{i=1}^m \E\Big( \one_{\tau=\alpha_ i } \E \Big( \Big|\int_{\alpha_i }^T \big( f(s,W_s, Y)- f(s,W_{k_n(s)}, Y) \big) \, ds\Big|^2 \Big| \mathcal{F}_{\alpha_i} \Big).
\end{equs}
Since 
\begin{equs}
 \one_{\tau=\alpha_ i }\E \Big( \Big|\int_{\alpha_i }^T \big( f(s,W_s, Y)- f(s,W_{k_n(s)}, Y) \big) \, ds\Big|^2 \Big| \mathcal{F}_{\alpha_i} \Big)= \one_{\tau=\alpha_ i }g(W_{\alpha_i}, Y), 
\end{equs}
with 
\begin{equs}
g(x, y)= \E \Big|\int_{\alpha}^T \left( f(s,W_s-W_\alpha+x, y )- f(s,W_{k_n(s)}-W_\alpha+x, y) \right) \, ds \Big|^2, 
\end{equs}
it follows from \eqref{eq:estimate-alpha} that 
\begin{equs}\label{eq:almost done}
\E \Big|\int_{\tau }^T \big( f(s,W_s, Y)- f(s,W_{k_n(s)}, Y) \big) \, ds\Big|^2 \leq N \sup|f|^2 n^{-1} \log (n+1). 
\end{equs}
If $\tau \leq T$ is an arbitrary stopping time, one can find  
a sequence of simple stopping times $\tau_m$ converging to $\tau$ from above which by Fatou's lemma implies that (notice that if $Y$ is $\mathcal{F}_\tau$-measurable then it is $\mathcal{F}_{\tau_m}$-measurable as well) \eqref{eq:almost done} holds for arbitrary stopping times $\tau \leq T$. 
Finally, \eqref{eq:main-estimate} follows from \eqref{eq:almost done} since $\int_{\tau}^{\tau'}=\int_{\tau}^T-\int_{\tau'}^T$.
\end{proof}

\begin{lemma}                \label{lem:main-lemma-2}
Let $b:\R^d\to\R^d$ and $f:[0,T]\times\R^d\times\mathcal{Y}\to\R$ be bounded measurable functions and define $X^n$ as in \eqref{eq:euler}.
Let $\tau \leq \tau' \leq T$ be stopping times and $Y$ be a $\mathcal{F}_{\tau}$-measurable $\mathcal{Y}$-valued random variable.
Then for all $\eps\in(0,1)$ one has the bound
\begin{equs}\label{eq:averaging 2}
\E \Big|\int_{\tau}^{\tau'} \big( f(s,X^n_s,Y)- f(s, X^n_{k_n(s)},Y) \big) \, ds \Big|^2 \leq N  n^{-1+\eps},
\end{equs}
for all $n \in \bN$, $r \in [0,T]$, where
$N=N(\eps,T,d,\sup|b|,\sup|f|)$.
\end{lemma}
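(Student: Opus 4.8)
The plan is to remove the piecewise constant drift by a Girsanov transformation, apply the Brownian estimate of Lemma \ref{lem:main-lemma} under the new measure, and then transfer the bound back to $\bP$ at the cost of an arbitrarily small loss in the exponent, which swallows the logarithmic factor.

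Since $b$ is bounded and $s\mapsto X^n_{k_n(s)}$ is progressively measurable, $\rho_t:=\mathcal E\big(-\int_0^{\cdot}b(X^n_{k_n(s)})\,dW_s\big)_t$ is a true martingale, so $d\tilde\bP:=\rho_T\,d\bP$ defines a probability measure equivalent to $\bP$, under which $\tilde W_t:=W_t+\int_0^t b(X^n_{k_n(s)})\,ds$ is an $(\mathcal F_t)$-Wiener process. By construction $X^n_t=x_0^n+\tilde W_t$ for all $t$, hence with $\tilde Y:=(Y,x_0^n)$ (an $\mathcal F_\tau$-measurable, $\mathcal Y\times\R^d$-valued random variable, as $\mathcal F_0\subseteq\mathcal F_\tau$) and $\tilde f(s,w,(y,x)):=f(s,x+w,y)$ (bounded measurable, $\sup|\tilde f|=\sup|f|$) one has
\begin{equs}
I:=\int_{\tau}^{\tau'}\big(f(s,X^n_s,Y)-f(s,X^n_{k_n(s)},Y)\big)\,ds=\int_{\tau}^{\tau'}\big(\tilde f(s,\tilde W_s,\tilde Y)-\tilde f(s,\tilde W_{k_n(s)},\tilde Y)\big)\,ds .
\end{equs}
As $\bP$ and $\tilde\bP$ have the same null sets, $(\Omega,\mathcal F,(\mathcal F_t),\tilde\bP)$ still satisfies the usual conditions, so Lemma \ref{lem:main-lemma} applies verbatim under $\tilde\bP$ and gives $\E^{\tilde\bP}|I|^2\le N(\sup|f|^2)\,n^{-1}\log(n+1)$ with $N=N(T,d)$.

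To return to $\bP$, note that $d\bP/d\tilde\bP=\rho_T^{-1}=\mathcal E\big(\int_0^{\cdot}b(X^n_{k_n(s)})\,d\tilde W_s\big)_T$, whose $L_p(\tilde\bP)$-norm is bounded by a constant depending only on $p$, $T$ and $\sup|b|$ since $b$ is bounded. Fix $q\in\big(1,2/(2-\eps)\big)$, so that $1/q>1-\eps/2$, and let $p$ be the conjugate exponent. Using the pointwise bound $|I|\le 2T\sup|f|$ we get $\E^{\tilde\bP}|I|^{2q}\le(2T\sup|f|)^{2q-2}\,\E^{\tilde\bP}|I|^2$, so by Hölder's inequality
\begin{equs}
\E|I|^2=\E^{\tilde\bP}\big[(d\bP/d\tilde\bP)\,|I|^2\big]\le\big\|d\bP/d\tilde\bP\big\|_{L_p(\tilde\bP)}\big(\E^{\tilde\bP}|I|^{2q}\big)^{1/q}\le N\big(n^{-1}\log(n+1)\big)^{1/q}
\end{equs}
with $N=N(\eps,T,d,\sup|b|,\sup|f|)$. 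Finally $\big(n^{-1}\log(n+1)\big)^{1/q}\le n^{-1/q}\log(n+1)\le N_\eps\, n^{-1/q+\eps/2}\le N_\eps\, n^{-1+\eps}$, which is \eqref{eq:averaging 2}.

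The only genuinely delicate point is the treatment of the Radon--Nikodym weight: Lemma \ref{lem:main-lemma} supplies only a second moment bound under $\tilde\bP$, so one cannot directly bound $\E^{\tilde\bP}\big[(d\bP/d\tilde\bP)\,|I|^2\big]$ by Hölder's inequality unless one also controls a moment of $|I|$ of order slightly above $2$; this is where the crude but crucial pointwise bound $|I|\le 2T\sup|f|$ enters, upgrading the $L_2(\tilde\bP)$-estimate to an $L_{2q}(\tilde\bP)$-estimate for free. Everything else — checking that the Girsanov-transformed data fit the hypotheses of Lemma \ref{lem:main-lemma}, and absorbing the logarithm into the $\eps$-loss — is routine.
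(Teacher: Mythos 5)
Your proposal is correct and follows essentially the same route as the paper: a Girsanov change of measure to reduce to the Brownian estimate of Lemma \ref{lem:main-lemma} (with $x_0^n$ absorbed into the auxiliary variable $Y$), followed by H\"older's inequality against a moment of the Radon--Nikodym density, using the trivial pointwise bound $|I|\leq 2T\sup|f|$ to interpolate and give up an $\eps$ in the exponent that also absorbs the logarithm. The paper phrases the interpolation as $h^2\leq N h^{2-\eps}$ combined with a negative moment of $\rho_n$ under $\bP$, which is the same estimate as your $L_p(\tilde\bP)$-bound on $\rho_T^{-1}$ in slightly different notation.
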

\begin{proof}
For an $\R^d$-valued stochastic process $Z$ let us denote
\begin{equ}
h(Z)=\Big|\int_{\tau}^{\tau'} \big( f(s,Z_s,Y)- f(s, Z_{k_n(s)},Y) \big) \, ds \Big|.
\end{equ}
In other words, we aim to bound $\E h(X^n)^2$. Notice also that \eqref{eq:main-estimate} provides a bound for $\E h(W+x)^2$ whenever $W$ is an $(\mathcal{F}_t)_{t \in [0,T]}$-Wiener process and $x\in\R^d$, as well as the fact that one has a trivial bound $h\leq N$.
Let us set 
\begin{equs}
\rho_n = \exp\Big(-\int_0^T \sum_{i=1}^nb^i(X^n_{k_n(s)}) \, dW^i_s - \frac{1}{2}\int_0^T \sum_{i=1}^d |b^i(X^n_{k_n(s)})|^2 \, ds  \Big). 
\end{equs}
By Girsanov's theorem, under the measure $d \bP^n = \rho_n d \bP$, $X^n-x$ is 
an  $(\mathcal{F}_t)_{t \in [0,T]}$-Wiener process, therefore by the above properties and H\"older's inequality we have
\begin{equs}
\E h(X^n)^2	& \leq N \E\big(h(X^n)^{2-\eps}\rho_n^{(2-\eps)/2} \rho_n^{(\eps- 2)/2}\big)
\\
&\leq N[\E\big(h(X^n)^2\rho_n\big)]^{(2-\eps)/2}\,[\E\rho_n^{(\eps- 2)/\eps}]^{\eps/2}
\\
&\leq N [ n^{-1} \text{log} \  (n+1) ]^{(2-\eps)/2}\,[\E\rho_n^{(\eps- 2)/\eps}]^{\eps/2}.
\end{equs}
Notice that $ [ n^{-1} \text{log} \  (n+1) ]^{(2-\eps)/2} \leq  N n^{-1+\eps}$,
and we are done since
\begin{equs}
\sup_n \E  \ \rho_n^{(\eps- 2)/\eps}  \leq N \exp\Big( \tfrac{(\eps-2)^2}{\eps^2}T \sup|b|\Big)\leq N.
\end{equs}
\end{proof}

\subsection{Regularity of the Kolmogorov equation}
The final ingredient concerns the regularity of the associated Kolmogorov equation.
While the result is possibly known, we did not find a reference in this form, so we provide a short proof.
We note that \eqref{eq:PDE1} is the only instance where the Dini continuity assumption is used.
We denote by $C^{1,2}(Q_T)$ the space of all bounded continuous functions $f$  on $Q_T=(0,T) \times \bR^d$ such that the derivatives $\D_tf$, $\nabla f$,$\nabla^2f$ exist, are continuous and bounded, and we use the norm
\begin{equs}
\|f\|_{C^{1,2}(Q_T)}:= \|f\|_{L_\infty(Q_T)}+\|\nabla f\|_{L_\infty(Q_T)}+\|\nabla^2f\|_{L_\infty(Q_T)}+\|\D_t f\|_{L_\infty(Q_T)}.
\end{equs}

\begin{lemma} \label{lem:PDE-estimates1}
Let $f,g\in \mathcal{D}$ and suppose that $\|f\|_{\mathcal{D}} \leq K$ for some $K\geq 0$.  There exists $T_0>0 $  depending only on $K$ and $d$, such that  there exists a unique bounded classical solution $u$ of
\begin{equs}
\begin{aligned}                     \label{eq:main-PDE}
\D_tu& = \tfrac{1}{2}\Delta u+f\cdot\nabla u +g, \qquad u(0,\cdot)=0.
\end{aligned}
\end{equs}
on $Q_{T_0}$. Moreover, for all $T\in(0,T_0]$, the solution of \eqref{eq:main-PDE} on $[0,T]$ satisfies the bounds
\begin{equs}                   
\| u\|_{C^{1,2}(Q_{T})} &\leq N \|g \|_{\mathcal{D}},\label{eq:PDE1}
\\            
\|\nabla u^i \|_{L_\infty(Q_T)} &\leq \sqrt{T} N \| g\|_{L_\infty(\R^d)},\label{eq:PDE2}
\end{equs}
where $N=N(d,\vartheta,K)$. 
\end{lemma}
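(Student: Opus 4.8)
## Proof proposal for Lemma \ref{lem:PDE-estimates1}

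The plan is to combine classical parabolic Schauder-type estimates in the Dini scale with a standard fixed-point argument on a short time interval, and then to extract the smallness in $T$ of $\nabla u$ from the heat semigroup representation. First I would recall the heat semigroup $P_t$ on $\bR^d$ and rewrite \eqref{eq:main-PDE} in mild (Duhamel) form,
\begin{equs}
u(t,\cdot) = \int_0^t P_{t-s}\big(f\cdot\nabla u(s,\cdot) + g\big)\,ds.
\end{equs}
The key analytic input is the gradient estimate for the inhomogeneous heat equation with a Dini-continuous source: if $v = \int_0^t P_{t-s} h(s,\cdot)\,ds$ with $h(s,\cdot)\in\cD$ uniformly in $s$, then $v\in C^{1,2}$ with
\begin{equs}
\|v\|_{C^{1,2}(Q_T)} \leq N\Big( \sup_{s\le T}\|h(s,\cdot)\|_{L_\infty} + \sup_{s\le T}[h(s,\cdot)]_{\vartheta}\Big),
\end{equs}
where $[\cdot]_\vartheta$ denotes the Dini seminorm implicit in $\|\cdot\|_\cD$; this is proved by differentiating the Gaussian kernel twice, subtracting the value at the base point to use the Dini modulus, and integrating $\int_0^1 \vartheta(r)/r\,dr<\infty$, exactly as in classical Schauder theory but with $r^\alpha$ replaced by $\vartheta(r)$. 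Also needed is the crude bound $\|\nabla \int_0^t P_{t-s}h(s,\cdot)\,ds\|_{L_\infty(Q_T)} \le N\sqrt{T}\,\sup_{s\le T}\|h(s,\cdot)\|_{L_\infty}$, which comes just from $\|\nabla P_{t-s}\|_{L_\infty\to L_\infty}\le N(t-s)^{-1/2}$ and integrating.

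Next I would set up the contraction. On the space $\cB_T$ of functions $u$ on $Q_T$ with $\|\nabla u\|_{L_\infty(Q_T)}<\infty$ and $\nabla u(t,\cdot)\in\cD$ uniformly, define the map $\Phi(u)(t,\cdot) = \int_0^t P_{t-s}(f\cdot\nabla u(s,\cdot)+g)\,ds$. Using $\|f\cdot\nabla u\|_\cD \le N\|f\|_\cD\|\nabla u\|_{L_\infty(Q_T)\cap\cD\text{-in-space}}$ (a product estimate in $\cD$, which holds since $\cD\subset L_\infty$ and $\vartheta$ is subadditive-controlled), the Dini Schauder bound gives $\|\nabla\Phi(u)\|_{C^{0}\text{-type}} \le N\|f\|_\cD \|\nabla u\|_{\cB_T} + N\|g\|_\cD$; but to close the contraction I would instead measure the difference $\Phi(u_1)-\Phi(u_2)$ in the weaker norm $\|\nabla(\cdot)\|_{L_\infty(Q_T)}$ and use the $\sqrt T$ bound: $\|\nabla(\Phi(u_1)-\Phi(u_2))\|_{L_\infty(Q_T)} \le N\sqrt T\,\|f\|_{L_\infty}\,\|\nabla(u_1-u_2)\|_{L_\infty(Q_T)}$. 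Choosing $T_0 = T_0(K,d)$ so that $N\sqrt{T_0}\,K < 1/2$ makes $\Phi$ a contraction on the ball $\{\|\nabla u\|_{L_\infty(Q_T)}\le 2N\sqrt T\|g\|_{L_\infty}\}$ intersected with a bounded set in the Dini-gradient norm (which is preserved by $\Phi$ thanks to the Schauder bound), yielding existence and uniqueness of the mild solution on $Q_{T_0}$; elliptic/parabolic regularity then upgrades it to a classical $C^{1,2}$ solution. Uniqueness of the \emph{classical} bounded solution follows from the same $\sqrt T$ contraction applied to a difference of two solutions (or by a maximum-principle argument after noting $\nabla u$ is bounded).

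Finally the quantitative bounds. For \eqref{eq:PDE2}: once $u$ is known to solve the mild equation, apply the $\sqrt T$ gradient bound directly to $u = \int_0^t P_{t-s}(f\cdot\nabla u+g)\,ds$ to get $\|\nabla u\|_{L_\infty(Q_T)} \le N\sqrt T(\|f\|_{L_\infty}\|\nabla u\|_{L_\infty(Q_T)} + \|g\|_{L_\infty})$, and absorb the first term into the left side using $T\le T_0$ and the choice of $T_0$; this gives $\|\nabla u^i\|_{L_\infty(Q_T)}\le N\sqrt T\|g\|_{L_\infty}$. For \eqref{eq:PDE1}: with $\|\nabla u\|_{L_\infty(Q_T)}$ now controlled (by a $T$-independent constant times $\|g\|_\cD$, say), and with $\nabla u(s,\cdot)\in\cD$ with seminorm bounded via the fixed-point ball, the source $h = f\cdot\nabla u + g$ has $\sup_s\|h(s,\cdot)\|_\cD \le N\|g\|_\cD$; the Dini Schauder estimate then yields $\|u\|_{C^{1,2}(Q_T)}\le N\|g\|_\cD$, noting that $\partial_t u = \tfrac12\Delta u + f\cdot\nabla u + g$ is controlled once $\nabla^2 u$ is. The main obstacle I anticipate is the Dini-scale Schauder estimate for $\nabla^2 v$ with only a Dini-continuous (not Hölder) source: one must carefully handle the second-derivative heat kernel bound $|\nabla^2 P_t(x)|\le N t^{-1-d/2}e^{-|x|^2/Nt}$, split the time integral at $t-s\sim|x-y|^2$ when estimating the increment of $\nabla^2 v$, and verify that the resulting integral $\int_0^1 \vartheta(r)r^{-1}\,dr$ (and a variant with an extra log) converges — this is where condition \eqref{eq:dini-cond} is essential, and it is the one place the bare $L_\infty$ bound does not suffice.
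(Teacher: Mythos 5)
Your overall architecture (mild formulation, a contraction/absorption in the weak norm $\|\nabla(\cdot)\|_{L_\infty}$ with the $\sqrt T$ gain, then a single application of a Dini--Schauder estimate to get \eqref{eq:PDE1}) is essentially the paper's, and your derivation of \eqref{eq:PDE2} by direct absorption is fine (the paper instead uses a parabolic scaling $u_\lambda(t,x)=u(\lambda^2t,\lambda x)$ to pass from $T_0$ to general $T\le T_0$, but your route is equivalent). However, there is a genuine gap in how you control the \emph{Dini seminorm of $\nabla u$ in space}, which you need both for the invariance of your ``Dini-gradient ball'' and for the claim $\sup_s\|f\cdot\nabla u(s,\cdot)+g\|_{\mathcal D}\le N\|g\|_{\mathcal D}$ in your last paragraph. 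The only tool you offer for this is the full Dini--Schauder bound $\|v\|_{C^{1,2}(Q_T)}\le N\|h\|_{\mathcal D_T}$, whose constant does \emph{not} become small as $T\to0$. Feeding $h=f\cdot\nabla u+g$ into it gives $[\nabla u]_{\vartheta}\le NK\big(\|\nabla u\|_{L_\infty}+[\nabla u]_{\vartheta}\big)+N\|g\|_{\mathcal D}$, and the term $NK[\nabla u]_{\vartheta}$ cannot be absorbed unless $NK<1$ --- a condition on $K$ you cannot arrange by shrinking $T_0$. So the self-map property of your ball in the Dini-gradient norm, and hence the hypothesis under which you invoke the Schauder estimate for \eqref{eq:PDE1}, is unjustified as written.

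The missing ingredient is an intermediate estimate that controls a \emph{H\"older} seminorm of $\nabla u$ using only the $L_\infty$ norm of the source, \emph{with a positive power of $T$}: for $v=\int_0^tP_{t-s}h(s,\cdot)\,ds$ one has $\sup_{t\le T}\|\nabla v(t)\|_{C^{\alpha}}\le N T^{(1-\alpha)/2}\|h\|_{L_\infty(Q_T)}$, which follows from $\|\nabla p(t,\cdot)\|_{L_1}\le Nt^{-1/2}$ and $\|\nabla^2p(t,\cdot)\|_{L_1}\le Nt^{-1}$ by interpolating the increment of the kernel (this is the paper's estimate \eqref{eq:est-grad-Holder}). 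Taking $\alpha=1/2$, the factor $T^{1/4}$ lets you absorb $\|f\|_{L_\infty}\|\nabla u\|_{C^{1/2}}$ and obtain a closed a priori bound $\sup_{t\le T_0}\|\nabla u(t)\|_{C^{1/2}}\le\|g\|_{L_\infty}$ for small $T_0(K,d)$; then, after the harmless normalisation $\vartheta(r)\ge\sqrt r$ (which only decreases $\|\cdot\|_{\mathcal D}$), $C^{1/2}\subset\mathcal D$ and $f\cdot\nabla u\in\mathcal D_{T_0}$, so the Dini--Schauder bound \eqref{eq:dini-no-grad} can be applied \emph{once} (not iterated) to yield \eqref{eq:PDE1}. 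With that one estimate inserted, your argument closes; without it, the bootstrap from $\nabla u\in L_\infty$ to $\nabla u(s,\cdot)\in\mathcal D$ is circular.
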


To prove Lemma \ref{lem:PDE-estimates1}, since replacing $\vartheta(r)$ by $\vartheta(r)\vee \sqrt{r}$ can only decrease the $\|\cdot\|_\cD$ norm of any function,
without loss of generality we assume that $\vartheta(r)\geq \sqrt{r}$ for all $r\in[0,1]$. We denote by $\cD_T$ the space of all continuous functions on $[0,T]\times \bR^d$ such that 
$$
\| f\|_{\cD_T} := \sup_{t \in [0,T]} \| f(t)\|_{\cD} < \infty.
$$
First consider the simpler equation 
\begin{equation}                      \label{eq:no-grad}
\D_tu=\tfrac{1}{2}\Delta u+f ,  \qquad  
u(0, \cdot)=0.            
\end{equation}
We will use the following well-known properties of \eqref{eq:no-grad}.
While these are well-known in the PDE folklore (for an elliptic counterpart, see e.g. \cite{Gilbarg-Trudinger}), we did not find a reference for this exact form, so we sketch the proof in the appendix for the interested reader.
\begin{lemma}\label{lem:PDE easy}
Let $T \in (0,1]$ and $f \in \mathcal{D}_T$. Let $u$ be a bounded distributional solution of \eqref{eq:no-grad}. Then $u$ is a classical solution of \eqref{eq:no-grad} which moreover satisfies for $\alpha\in(0,1)$
\begin{equs}                       \label{eq:dini-no-grad}
\| u\|_{C^{1,2}(Q_T)} \leq & N_0 \| f\|_{\mathcal{D}_T},
\\                        \label{eq:est-u}
\| u\|_{L_\infty(Q_T)} \leq & N_1 T \| f\|_{L_\infty(Q_T)},
\\                       \label{eq:est-grad-Holder}
\sup_{t \in [0,T]}\|\nabla u(t)\|_{C^\alpha(\bR^d)}  \leq & N_2 T^{(1-\alpha)/2} \| f\|_{L_\infty(Q_T)}
\end{equs}
where $N_0=N_0(d,  \vartheta)$, $N_1=N_1(d)$, and $N_2=N_2(d)$.
\end{lemma}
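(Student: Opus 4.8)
The plan is to reduce everything to the Duhamel representation. Write $p_\tau(z)=(2\pi\tau)^{-d/2}e^{-|z|^2/(2\tau)}$ for the heat kernel, $P_\tau$ for the associated convolution semigroup, and set
\begin{equs}
v(t,x):=\int_0^t\big(P_{t-s}f(s,\cdot)\big)(x)\,ds=\int_0^t\int_{\R^d}p_{t-s}(x-y)\,f(s,y)\,dy\,ds .
\end{equs}
Since $f$ is bounded and continuous, $v$ is a well-defined bounded function, $v(0,\cdot)=0$, and $v$ is (at least) a bounded distributional solution of \eqref{eq:no-grad}. If $u$ is any bounded distributional solution of \eqref{eq:no-grad}, then $w:=u-v$ is a bounded distributional solution of the homogeneous heat equation with zero initial datum; by the uniqueness of bounded solutions of the Cauchy problem for the heat equation (which one proves in a standard way, e.g.\ by mollifying in the space variable, invoking the maximum principle, and letting the mollification parameter go to zero) we get $w\equiv 0$, i.e.\ $u=v$. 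Hence it suffices to show that $v$ is a classical solution and to establish the three bounds for $v$.

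The bound \eqref{eq:est-u} is immediate from $|v(t,x)|\leq\int_0^t\|P_{t-s}f(s)\|_{L_\infty}\,ds\leq\int_0^t\|f(s)\|_{L_\infty(\R^d)}\,ds\leq T\|f\|_{L_\infty(Q_T)}$. Differentiating under the integral sign and using $\|\nabla p_\tau\|_{L_1(\R^d)}\leq N\tau^{-1/2}$ gives $\|\nabla v(t)\|_{L_\infty}\leq N\|f\|_{L_\infty}\int_0^t(t-s)^{-1/2}\,ds\leq N\sqrt T\,\|f\|_{L_\infty}$, which controls the $\|\nabla u\|$-part of \eqref{eq:dini-no-grad} and, since $T\leq 1$ implies $\sqrt T\leq T^{(1-\alpha)/2}$, also the $L_\infty$-part of \eqref{eq:est-grad-Holder}. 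For the H\"older seminorm in \eqref{eq:est-grad-Holder} I would use the interpolation inequality $[h]_{C^\alpha(\R^d)}\leq N\|h\|_{L_\infty}^{1-\alpha}\|\nabla h\|_{L_\infty}^{\alpha}$ together with the kernel bounds $\|\nabla P_\tau g\|_{L_\infty}\leq N\tau^{-1/2}\|g\|_{L_\infty}$ and $\|\nabla^2 P_\tau g\|_{L_\infty}\leq N\tau^{-1}\|g\|_{L_\infty}$: applied to $h=\nabla P_\tau f(s)$ these give $[\nabla P_\tau f(s)]_{C^\alpha}\leq N\tau^{-(1+\alpha)/2}\|f\|_{L_\infty}$, and since $(1+\alpha)/2<1$, integrating in $s$ yields $[\nabla v(t)]_{C^\alpha}\leq N\,t^{(1-\alpha)/2}\|f\|_{L_\infty}\leq N\,T^{(1-\alpha)/2}\|f\|_{L_\infty}$ (the resulting constant also depends on $\alpha$).

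The heart of the matter — and the only place the Dini condition \eqref{eq:dini-cond} is needed — is the bound on $\nabla^2 v$. Differentiating twice under the integral and using $\int_{\R^d}\nabla^2 p_\tau(z)\,dz=0$ to subtract $f(s,x)$, one writes
\begin{equs}
\nabla^2 v(t,x)=\int_0^t\int_{\R^d}\nabla^2 p_{t-s}(x-y)\,\big(f(s,y)-f(s,x)\big)\,dy\,ds .
\end{equs}
Using $|\nabla^2 p_\tau(z)|\leq N\tau^{-1}q_\tau(z)$ with $q_\tau(z)=\tau^{-d/2}q_1(z/\sqrt\tau)$ for a fixed Gaussian-type weight $q_1$ with finite moments, and estimating $|f(s,y)-f(s,x)|$ by $\|f\|_{\cD_T}\vartheta(|x-y|)$ on $\{|x-y|\leq 1\}$ and by $2\|f\|_{\cD_T}$ on $\{|x-y|>1\}$, the rescaling $z=\sqrt\tau w$ (together with monotonicity of $\vartheta$ and, after a standard reduction making $\vartheta$ concave, the inequality $\vartheta(\sqrt\tau|w|)\leq |w|\vartheta(\sqrt\tau)$ for $|w|\geq 1$) bounds the $z$-integral over the near region by $N\vartheta(\sqrt\tau)$, while the Gaussian tail bounds the far region by $N\tau$. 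Thus
\begin{equs}
\|\nabla^2 v(t)\|_{L_\infty}\leq N\|f\|_{\cD_T}\int_0^t\tau^{-1}\big(\vartheta(\sqrt\tau)+\tau\big)\,d\tau,
\end{equs}
and the substitution $r=\sqrt\tau$ turns $\int_0^t\tau^{-1}\vartheta(\sqrt\tau)\,d\tau$ into $2\int_0^{\sqrt t}\vartheta(r)/r\,dr\leq 2\int_0^1\vartheta(r)/r\,dr<\infty$, giving $\|\nabla^2 v(t)\|_{L_\infty}\leq N(d,\vartheta)\|f\|_{\cD_T}$. The same computation shows the $\tau$-integral converges uniformly, so differentiation under the integral is legitimate and $v,\nabla v,\nabla^2 v$ are continuous on $Q_T$ (with $v$ jointly continuous up to $t=0$ and $v(0,\cdot)=0$); the equation then gives $\D_t v=\tfrac12\Delta v+f$ pointwise — the standard fact that the Duhamel potential of a Dini-continuous source is a classical solution — so that $\|\D_t v\|_{L_\infty}\leq N\|f\|_{\cD_T}$ and \eqref{eq:dini-no-grad} follows. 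The main obstacle is precisely this $\nabla^2 v$ estimate: one must keep the singular second-derivative kernel together with the modulus $\vartheta$, handle the region $|x-y|>1$ where $\vartheta$ is not even defined, and recognise that after the natural change of variables the time integral is exactly the Dini integral in \eqref{eq:dini-cond}; everything else is routine heat-kernel bookkeeping.
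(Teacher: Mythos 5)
Your proposal is correct and follows essentially the same route as the paper: the Duhamel representation, the elementary kernel bounds for \eqref{eq:est-u} and for $\nabla u$, an interpolation between the $\tau^{-1/2}$ and $\tau^{-1}$ kernel decay for \eqref{eq:est-grad-Holder}, and the cancellation $\int\nabla^2 p_\tau=0$ combined with the Dini integral for the $\nabla^2 u$ bound. The only cosmetic difference is in the last step: you cut at $|x-y|\le 1$ and invoke a concave (Dini) majorant of $\vartheta$ after rescaling, whereas the paper cuts at $|x-y|\le t^{1/4}$ and uses only monotonicity together with the observation that $\int_0^1\vartheta(r^\gamma)r^{-1}\,dr<\infty$; both yield the same integrable-in-time bound.
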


\begin{proof}[Proof of Lemma \ref{lem:PDE-estimates1}]
Uniqueness easily follows, see e.g. \cite{Krylov}.
As for existence,
let $g_n,   f_n $   be bounded functions with bounded derivatives of any order such that 
\begin{equs}
\|g_n-g\|_{L_\infty(\bR^d)}+\|f_n-f\|_{L_\infty(\bR^d)} \to 0, \qquad  \text{as} \ n \to \infty,
\end{equs}
and 
$$
\|g_n\|_{L_\infty(\bR^d)}\leq \|g\|_{L_\infty(\bR^d)}, \qquad \|f_n\|_{L_\infty(\bR^d)} \leq \|f\|_{L_\infty(\bR^d)}.
$$
The problem 
\begin{equs}
\D_tu_n = \tfrac{1}{2}\Delta u_n+f_n\cdot \nabla u_n +g_n,  \qquad  
u(0, \cdot)=0         
\end{equs}
has a classical solution $u_n \in  C^{1,2}(Q_{T_0})$ for any $T_0>0$ such that $u_n,\nabla u_n\in C(\overline{Q_{T_0}})$ (see e.g. \cite{Krylov}).
By \eqref{eq:est-grad-Holder} we have
\begin{equs}
\sup_{t \in [0,T_0]} \|\nabla u_n(t)\|_{C^{1/2}(\bR^d)} \leq N_2(d) {T_0}^{1/4} \left( \| f\|_{L_\infty(\bR^d)} \| \nabla u_n\|_{L_\infty(Q_T)}+\|g\|_{L_\infty(\bR^d)}\right). 
\end{equs} 
Hence, for  sufficiently small ${T_0}>0$ depending only on $d$ and $K$, we have
 for all $n \in \mathbb{N}$
\begin{equs}       \label{eq:bound-gradients}
\sup_{t \in [0,T_0]} \|\nabla u_n(t)\|_{C^{1/2}(\bR^d)} \leq \|g\|_{L_\infty(\bR^d)}. 
\end{equs} 
Similarly, by \eqref{eq:est-u}-\eqref{eq:est-grad-Holder}, for ${T_0}$ sufficiently small,  we see that 
\begin{equs}
\|u_n- & u_m\|_{L_\infty(Q_{T_0})}+ \sup_{t \in [0,{T_0}]}\| \nabla u_n(t)-\nabla u_m(t)\|_{C^{1/2}(\bR^d)} 
\\
&\leq \|g_n-g_m\|_{L_\infty(\bR^d)}+\|f_n-f_m\|_{L_\infty(\bR^d)} \|\nabla u_m\|_{L_\infty(Q_{T_0})}
\\
&\leq  \|g_n-g_m\|_{L_\infty(\bR^d)}+\|f_n-f_m\|_{L_\infty(\bR^d)} \|g\|_{L_\infty(\bR^d)},
\end{equs}
where for the last inequality we used \eqref{eq:bound-gradients}. Consequently, $u_n$ converges to a limit $u \in C(\overline{Q_{T_0}})$ with $\nabla u \in C(\overline{Q_{T_0}})$,  
which is a distributional solution \eqref{eq:main-PDE}. Moreover, it satisfies
\begin{equs}       \label{eq:est-grad-u-holder}
\sup_{t \in [0,T_0]}\| \nabla u(t) \|_{C^{1/2}(\bR^d)} \leq  \|g\|_{L_\infty(\bR^d)}.
\end{equs}
Consequently, $f \nabla u \in \mathcal{D}_{T_0}$, which implies by Lemma \ref{lem:PDE-estimates1} that $u$ is a classical solution of \eqref{eq:main-PDE} and satisfies
\begin{equs}
\| u\|_{C^{1,2}(Q_{T_0})} \leq & N \left( \| f \nabla u\|_{\mathcal{D}_{T_0}} +\|g\|_{\mathcal{D}} \right)
\\
\leq & N \left( \| f \|_{\mathcal{D}}\|\nabla u\|_{\mathcal{D}_{T_0}} +\|g\|_{\mathcal{D}} \right)
\\
\leq & N\left( \| f \|_{\mathcal{D}}\sup_{t \in [0,{T_0}]}\|\nabla u\|_{C^{1/2}(\bR^d)} +\|g\|_{\mathcal{D}} \right),
\end{equs}
where $N= N(\vartheta, d)$ and we have used $\vartheta(r)\geq \sqrt{r}$ for the last inequality. By \eqref{eq:est-grad-u-holder} and \eqref{eq:dini-no-grad} we obtain 
\begin{equs}
\| u\|_{C^{1,2}(Q_{T_0})} \leq  N \|g\|_{\mathcal{D}},
\end{equs}
with $N=N(\vartheta, d, K )$, so the estimates \eqref{eq:PDE1}-\eqref{eq:PDE2} hold for $T=T_0$.

The bound \eqref{eq:PDE1} trivially extends to $T<T_0$.
As for \eqref{eq:PDE2},
with $\lambda = \sqrt{T}/ \sqrt{T_0}\leq 1$ let us set $u_\lambda (t,x):=u(\lambda^2 t, \lambda x)$. Clearly $u_\lambda$ satisfies on $Q_{T_0}$
\begin{equs}
\D_tu_\lambda& = \tfrac{1}{2}\Delta u_\lambda+\lambda f_\lambda\cdot\nabla u_\lambda + \lambda^2 g_\lambda, \qquad u_\lambda(0,\cdot)=0,
\end{equs}
with $f_\lambda(x)=f(\lambda x)$ and $g_\lambda(x)=g(\lambda x)$. Since 
\begin{equ}
\| \lambda f_\lambda \|_{\mathcal{D}} \leq \| f_\lambda \|_{\mathcal{D}} \leq \|f\|_{\mathcal{D}} = K,
\end{equ}
we get by applying \eqref{eq:PDE2} with $T=T_0$
\begin{equs}
\lambda\|\nabla u\|_{L_\infty(Q_T)}=\| \nabla u_\lambda \|_{L_\infty(Q_{T_0})} \leq N \lambda^2 \|g_\lambda \|_{L_\infty(\R^d)}=N\lambda^2\|g\|_{L_\infty(\R^d)},
\end{equs}
thus obtaining  \eqref{eq:PDE2} for arbitrary $T\in(0,T_0]$.

\end{proof}

\section{Proofs of the main results}\label{sec:proof}

\begin{proof}[Proof of Theorem \ref{thm:main-theorem}]                  
First we prove for sufficiently small (but $n$-independent) time horizon $T$. Let $T_0=T_0(\|b\|_\cD,d)$ be given by Lemma \ref{lem:PDE-estimates1}.
We invoke the idea of \cite{FGP} (see also \cite{PT}): for any $T \in (0,T_0]$ and for each $i =1,...d$,  by Lemma \ref{lem:PDE-estimates1} and a simple time reversal, there exists a classical solution $u^i\in C^{1,2}(Q_T) \cap C ( \overline{Q_T})$   to
\begin{equs}
\D_t u^i +\tfrac{1}{2}\Delta u^i +b\cdot \nabla u^i= -b^i, \qquad  u(T,\cdot)=0,
\end{equs}
which satisfies the bounds
\begin{equ}\label{eq:regularity u}
\|u^i\|_{C^{1,2}}\leq N'\|b\|_\cD
\end{equ}
\begin{equs}                   \label{eq;grad-u}               
\|\nabla u^i \|_{L_\infty(Q_T)} \leq \sqrt{T} N' \| b\|_{L_\infty(\R^d)},
\end{equs}
where $N'= N(d, \vartheta, \|b\|_\cD)$.  We emphasise that $N'$ is independent of $T$, which is a convention that we keep for the rest of the proof.
Denote the $j$-th partial derivative of $u$ by $u_{x^j}$. Applying It\^o's formula for
$u^i(t,X_t)$ we have 
\begin{equs}                \label{eq:Ito-Tanaka1}
\int_0^t b^i(X_s) \, ds = u^i(0, x_0)-u^i(t, X_t)+ \sum_{j=1}^d\int_0^t  u^i_{x_j} (s, X_s) \, dW^j_s.
\end{equs} 
Similarly, we have 
\begin{equs}   \label{eq:Ito-Tanaka2}
\int_0^t b^i(X^n_s) \, ds =& u^i(0, x_0^n)-u^i(t, X^n_t)+ \sum_{j=1}^d\int_0^t  u^i_{x_j} (s, X^n_s) \, dW^j_s
\\
&+ \sum_{j=1}^d\int_0^t u^i_{x_j}(s, X^n_s)\big( b^j(X^n_{k_n(s)}) -  b^j(X^n_s) \big) \, ds.
\end{equs} 
Moreover
\begin{equs}
\E |X_t-X^n_t|^2  
 &\leq  9 \E|x_0-x_0^n|^2+ 9\E\left|\int_0^t b(X_s) \, ds - \int_0^t b(X^n_s) \, ds \right|^2 
\\
&\qquad +9\E  \left|\int_0^t b(X^n_s) \, ds - \int_0^t b(X^n_{k_n(s)}) \, ds \right|^2 
\\
&= :I^1_n+I_n^2+I_n^3.
\end{equs}
For $I_n^2$ we have by \eqref{eq:Ito-Tanaka1}-\eqref{eq:Ito-Tanaka2}
\begin{equs}        
I_n^2 &\leq  N' \sum_{i=1}^d  \E|u^i(0, x_0)-u^i(0, x_0^n)|^2+N' \sum_{i=1}^d  \E|u^i(t, X_t)-u^i(t, X^n_t)|^2
 \\
&\quad+ \sum_{i,j=1}^d N' \E\left| \int_0^t  u^i_{x_j} (s, X_s)-u^i_{x_j} (s, X^n_s)   \, dW^j_s \right|^2 
\\
&\quad +N' \sum_{i,j=1}^d \E  \left| \int_0^t u^i_{x_j}(s, X^n_s) b^j(X^n_s) - u^i_{x_j}(s, X^n_{k_n(s)}) b^j(X^n_{k_n(s)}) ) \, ds \right|^2 
\\
&\quad+ N' \sum_{i,j=1}^d  \E \int_0^t |b^j(X^n_{k_n(s)}) )| ^2 |u^i_{x_j}(s, X^n_s) - u^i_{x_j}(s, X^n_{k_n(s)})|^2 \, ds
\\
&=:\sum_{i=1}^5I_n^{2i}.\label{eq:estimateC2}
\end{equs}
One then has the following estimates: by \eqref{eq:regularity u} and the assumption on $x_0$, $x_0^n$, we have
\begin{equ}
I_n^1+I_n^{21}\leq Nn^{-1+\eps};
\end{equ}
by \eqref{eq;grad-u} we have
\begin{equs}
I_n^{22}\leq N'\sqrt{T}\E |X_t-X^n_t|^2;
\end{equs}
by It\^o's isometry and \eqref{eq:regularity u} we have
\begin{equ}
I_n^{23}=N'\sum_{i,j=1}^d\int_0^t|u^i_{x_j} (s, X_s)-u^i_{x_j} (s, X^n_s) |^2  \, ds\leq N\int_0^t \E |X_s-X^n_s|^2 \, ds;
\end{equ}
by the boundedness of $b$ and by \eqref{eq:regularity u} we have
\begin{equs}
I_n^{25} \leq  & N  \int_0^t \E |X^n_s-X^n_{k_n(s)} |^2 \, ds \leq N n^{-1}.  
 \end{equs}
The terms $I_n^{24}$ and $I_n^3$ both can be estimated using Lemma \ref{lem:main-lemma-2}, with $f= u^i_{x_j}b^j$ and $f=b$, respectively, to get
\begin{equ}
I_n^{24}+I_n^3\leq N n^{-1+\eps}.
\end{equ}
Combining the above bounds we get
\begin{equs}
\E |X_t-X^n_t|^2  
\leq  N n^{-1+\eps} + N'\sqrt{T}\E |X_t-X^n_t|^2+  N\int_0^t \E |X_s-X^n_s|^2 \, ds.
\end{equs}
For sufficiently small $T$ (say $\sqrt{T}< 1/(2N')$) the second term on the right-hand side can be omitted at the price of a constant factor. The conclusion then follows from Gronwall's lemma applied to the function $t\mapsto\E|X_t-X^n_t|^2$.

We now remove the smallness assumption on $T$. By the above, for some $\delta=\delta(C,\eps,\vartheta,d,\|b\|_{\cD})>0$, the statement is proved for $T'$ in place of $T$, for arbitrary $T'\in [0,\delta]$.
Now fix $n$ and define $k$ by $k/n=\kappa_n(\delta)$. Denote $m=\lceil\tfrac{T}{\kappa_n(\delta)}\rceil$ and note that $m\leq \tfrac{2T}{\delta}\leq N$.
Using the above proven bound, we have
\begin{equ}\label{eq:rev0}
\sup_{t\in[0,k/n]} \E |X^n_t-X_t|^2\leq Nn^{-1+\eps}.
\end{equ}
On the time interval $[k/n,2k/n]$, $X_{t+k/n}$ is the solution of
\begin{equ}
dY_t=b(Y_t)\,dt+\,dW_{t+k/n},\quad Y_{0}=X_{k/n},
\end{equ}
while $X^n_{t+k/n}$ is the solution of 
\begin{equ}
dY^n_t=b(Y^n_{\kappa_n(t)})\,dt+\,dW_{t+k/n},\quad Y^n_0=X^n_{k/n}.
\end{equ}
Moreover, the initial conditions $X_{k/n}, X^n_{k/n}$ satisfy the condition of Theorem \ref{thm:main-theorem}, by \eqref{eq:rev0}. Since the theorem is now proven to hold for small times, we can conclude
\begin{equ}
\sup_{t\in[k/n,2k/n]} \E |X^n_t-X_t|^2=\sup_{t\in[0,k/n]} \E |Y^n_t-Y_t|^2\leq Nn^{-1+\eps},
\end{equ}
and combining with \eqref{eq:rev0},
\begin{equ}\label{eq:rev1}
\sup_{t\in[0,2k/n]} \E |X^n_t-X_t|^2\leq Nn^{-1+\eps}.
\end{equ}
Iterating this procedure $m$ times we reach the full time horizon $[0,T]$.
\end{proof}

\begin{proof}[Proof of Theorem \ref{thm:main-theorem2}] 
We again prove the statement for $T\in(0,T_0]$ for some sufficiently small $T_0>0$ (to be determined later), the general case can then be deduced exactly as above. Correspondingly, from here on $N=N(C,\eps,\sup|b|)$.

For $ z \in \mathbb{\bR}$ let us define
\begin{equs}
\phi_z(x):= \int_0^x  \exp \left( - 2 \int_z^{ r } I_{ |z-s| \leq  2}b(s) \, ds \right) \, dr, \qquad \psi_z(x):= \phi^{-1}_z(x).
\end{equs}
Notice that $ \phi_z \in W^{2, \infty}(\bR)$ and for almost everywhere on $  [z-2, z+2]$ we have
\begin{equ}\label{eq:Zvonkin PDE}
\tfrac{1}{2}\phi_z''+b\phi_z'=0.
\end{equ}
Moreover, it  is straightforward to check that
\begin{equs}                   \label{eq:all-Lip}
\sup_{z,x} \left( |\phi_z'(x)| + |\phi_z''(x)|+|\psi_z'(x)|+|(\phi_z'\circ\psi_z)'(x)| \right) \leq N,
\end{equs}
with $N=N(\sup|b|)$.
We define inductively the stopping times
\begin{equs}
\bar{\tau}_0:= 0, \qquad \bar{\tau}_{m+1}:= \inf \{ t \geq \tau_m : |X_t-X_{\tau_m } |>1\}, \qquad \tau_m := \bar{\tau}_m\wedge T .
\end{equs}
First we aim to obtain a bound for
\begin{equ}\label{eq:explanation0}
\sup_{t\in[0,T]}\E\big( \one_{\tau_m  \leq t} |X_{t\wedge\tau_{m+1}}-X^n_{t\wedge\tau_{m+1}}|^2\big).
\end{equ}
By definition of $\phi_z$,  we can find a function $g=g(z,x)$ such that for each $z \in \bR$,  $g(z,\cdot)$ is a version of $\phi_z''$, $\frac{1}{2}g(z,x)+b(x)\phi'(x)=0$ for all $x \in[z-2,z+2]$, and for each $x \in \bR$, $g(\cdot, x)$ is continuous in $z$.  We choose this function as a representative of $\phi_z''(x)$  for now on  and  by It\^o's formula (which, although $\phi''_z$ is not necessarily continuous, holds, see \cite[Thm.~22.5]{Kallenberg})  we have 
 we have  on $  \llbracket  \tau_m , \tau_{m+1} \rrbracket := \{ (\omega, t) \in \Omega \times [0,T]: t \in [\tau+m, \tau_{m+1}] \} $
\begin{equs}
\phi_z(X_t)= \phi_z(X_{\tau_m}) +\int_{\tau_m}^t \frac{1}{2}\phi''_z(X_s)+b(X_s) \phi'_z( X_s) \, ds + \int_{\tau_m}^t \phi'_z (X_s) \, dW_s.
\end{equs} 
Let $(\rho_\eps)_{\eps>0}$ be a molification family. We multiply the above equality with $\rho_\eps(z-X_{\tau_m})$, we integrate over $z \in \bR$ and we let  $\eps \to 0$, to obtain by virtue of the continuity of $\phi_z, \phi'_z$ and $\phi_z''$ in $z$ and the dominated convergence theorem 
 \begin{equs}
\phi_{X_{\tau_m }}(X_t)= \phi_{X_{\tau_m }}(X_{\tau_m}) + \int_{\tau_m}^t \frac{1}{2}\phi''_{X_{\tau_m }} (X_s)+b(X_s) \phi'_{X_{\tau_m }}( X_s) \, ds+\int_{\tau_m}^t \phi'_{X_{\tau_m }}(X_s) \, dW_s.
\end{equs} 
Using  \eqref{eq:Zvonkin PDE} (which holds for all $x \in [r-2,r+2]$) we have  for  $Y^m_t = \phi_{X_{\tau_m }}(X_t)$  on   $  \llbracket  \tau_m , \tau_{m+1} \rrbracket$ that 

\begin{equs}\label{eq:Y equation}
dY^m_t =  \phi_{X_{\tau_m }}(X_{\tau_m })+ \int_{\tau_m }^t \phi'_{X_{\tau_m }}\circ \psi_{X_{\tau_m }} (Y^m _s) \, dW_s.
\end{equs}
On $  \llbracket  \tau_m , \tau_{m+1} \rrbracket$ we define 
 \begin{equs}
 Y^{m,n}_t = \phi_{X_{\tau_m }}(X^n_{\tau_m })+ \int_{\tau_m}^t \phi'_{X_{\tau_m }}\circ \psi_{X_{\tau_m }}(Y^{m,n}_{ k_n(s)\vee \tau_m}) \, dW_s. 
 \end{equs}
 Then by \eqref{eq:all-Lip} and the triangle inequality we can bound the quantity in \eqref{eq:explanation0} as
 \begin{equs}[eq:explanation1]
 \E &\big(  \one_{\tau_m  \leq t}  |X_{t\wedge\tau_{m+1}}-X^n_{t\wedge\tau_{m+1}}|^2\big)  
 \leq   N \E \big(  \one_{\tau_m  \leq t} |Y^m_{t\wedge\tau_{m+1}}-\phi_{X_{\tau_m }}(X^n_{t\wedge\tau_{m+1}})|^2\big) 
\\
& \leq   N \E \big(  \one_{\tau_m  \leq t} |Y^m_{t\wedge\tau_{m+1}}-Y^{m,n}_{t\wedge\tau_{m+1}})|^2\big) +   N \E \big(  \one_{\tau_m  \leq t} |Y^{m,n}_{t\wedge\tau_{m+1}}-\phi_{X_{\tau_m }}(X^n_{t\wedge\tau_{m+1}})|^2\big),
\end{equs}
The first term on the right-hand side is simply the error of an Euler scheme for the SDE \eqref{eq:Y equation} with Lipschitz coefficient $\phi'_{X_{\tau_m }}\circ \psi_{X_{\tau_m }}$.
This is standard to estimate (for the convenience of the reader we include a short proof in the appendix):
 \begin{equs}   
 \E \big(  \one_{\tau_m  \leq t} |Y^m_{t\wedge\tau_{m+1}}-Y^{m,n}_{t\wedge\tau_{m+1}})|^2\big) &\leq    N \E |  \phi_{X_{\tau_m }}(X_{\tau_m })-  \phi_{X_{\tau_m }}(X^n_{\tau_m })|^2 + N n^{-1}
 \\
& \leq  N  \E |  X_{\tau_m }-  X^n_{\tau_m }|^2 +N n^{-1},   \label{eq:estimate-Y,Y^n}
 \end{equs}
 where we have used again \eqref{eq:all-Lip}  for the last inequality. 
 Moving on the second term on the right-hand side of \eqref{eq:explanation1},
by It\^o's formula again we have on $  \llbracket  \tau_m , \tau_{m+1} \rrbracket$
\begin{equs}
\phi_{X_{\tau_m}}(X^n_t)-Y^{m,n}_t&= \int_{\tau_m}^t   \phi_{X_{\tau_m}}'(X^n_s)b(X^n_{k_n(s)})+ \frac{1}{2} \phi_{X_{\tau_m}}''(X^n_s) \, ds 
\\
&\quad +   \int_{\tau_m}^t\big(  \phi_{X_{\tau_m}}'(X^n_s) - \phi_{X_{\tau_m}}' \circ \psi (Y^{m,n}_{k_n(s)\wedge \tau_m }) \big)  \, dW_s
\\
&= \int_{0}^t \one_{\tau_m\leq s}\one_{|X_s-X^n_s|<1}\left( \phi_{X_{\tau_m}}'(X^n_s)b(X^n_{k_n(s)}) - \phi_{X_{\tau_m}}'(X^n_s)b(X^n_s) \right) \, ds 
\\
&\quad + \int_{0}^t\one_{\tau_m\leq s}\one_{|X_s-X^n_s|\geq 1} \left(  \phi_{X_{\tau_m}}'(X^n_s)b(X^n_{k_n(s)})+ \frac{1}{2} \phi_{X_{\tau_m}}''(X^n_s) \right) \, ds 
\\
&\quad +   \int_{0}^t\one_{\tau_m\leq s}\big(  \phi_{X_{\tau_m}}'(X^n_s) - \phi_{X_{\tau_m}}' \circ \psi_{X_{\tau_m}} (Y^n_{k_n(s)\vee\tau_m}) \big)  \, dW_s
\\
&=: I^1_t+I^2_t+I^3_t.
\end{equs}
where for the second equality we have used \eqref{eq:Zvonkin PDE}. 
It immediately follows from \eqref{eq:all-Lip} that
\begin{equs}[11]
\E  \one_{\tau_m \leq t } |I^2_{t\wedge\tau_{m+1}}|^2 &\leq N  \E  \int_{0}^{t\wedge\tau_{m+1}}\one_{\tau_m\leq s}\one_{|X_s-X^n_s|\geq 1} \, ds=:J.
\end{equs}
Next, one can write
\begin{equs}[22]
 \E \one_{\tau_m \leq t } |I^1_{t\wedge\tau_{m+1}}|^2 
& \leq   N \E  \int_{\tau_m}^{t\wedge\tau_{m+1}} \one_{|X_s-X^n_s|\geq 1} \, ds 
\\
&\quad + 2 \E  \big|\int_{\tau_m}^{(\tau_m\vee t)\wedge\tau_{m+1}}
\big( \phi_{X_{\tau_m}}^\prime(X^n_s)b(X^n_{k_n(s)}) - \phi_{X_{\tau_m}}^\prime(X^n_s)b(X^n_s) \big) \, ds \big|^2 
\\
& \leq  J + N \int_0^T \E |X^n_s -X^n_{k_n(s)}|^2 \, ds 
\\
&\quad +   N  \E  \big|\int_{\tau_m}^{(\tau_m\vee t)\wedge\tau_{m+1}} 
\big( \phi_{X_{\tau_m}}^\prime(X^n_{k_n(s)})b(X^n_{k_n(s)}) - \phi_{X_{\tau_m}}^\prime(X^n_s)b(X^n_s) \big) \, ds \big|^2 
\\
&\leq J +  N n^{-1+\eps},
\end{equs}
where for the last term we have used Lemma \ref{lem:main-lemma-2}.
Concerning $I^3$, by It\^o's isometry and \eqref{eq:all-Lip} we have
\begin{equs}[33]
\E  \one_{\tau_m \leq t } |I^3_{t\wedge\tau_{m+1}}|^2 &  \leq N \E\int_0^{t\wedge\tau_{m+1}}   \one_{\tau_m \leq s  } | \phi_{X_{\tau_m}}(X^n_s)-Y^{m,n}_s   | \, ds 
\\
&\quad + N  \E  \int_{\tau_m }^{\tau_{m+1} } |Y^{m,n}_s-Y^{m,n}_{k_n(s)\wedge \tau_m} |^2 \, ds
\\
 &  \leq N \int_0^ t  \E \one_{\tau_m \leq s } | \phi_{X_{\tau_m}}(X^n_{s\wedge\tau_{m+1}})-Y^{m,n}_{s\wedge\tau_{m+1}}   | ^2 \, ds +N n^{-1}.
\end{equs} 
Putting \eqref{11}-\eqref{22}-\eqref{33} together, we arrive at
\begin{equs}
\E  \one_{\tau_m \leq t }| \phi_{X_{\tau_m}}(X^n_{t\wedge\tau_{m+1}})-Y^{m,n}_{t\wedge\tau_{m+1}}|^2 &  \leq N \int_0^ t  \E \one_{\tau_m \leq s } | \phi_{X_{\tau_m}}(X^n_{s\wedge\tau_{m+1}})-Y^{m,n}_{s\wedge\tau_{m+1}}|^2\,ds
 \\
 &\quad+N J +  N n^{-1+\eps}.
\end{equs}
Applying Gronwall's lemma to the function $t\mapsto\E  \one_{\tau_m \leq t }| \phi_{X_{\tau_m}}(X^n_{t\wedge\tau_{m+1}})-Y^{m,n}_{t\wedge\tau_{m+1}}|^2$, we get
\begin{equs} \label{eq:est-phi(x)-Y}
\E  \one_{\tau_m \leq t }| \phi_{X_{\tau_m}}(X^n_{t\wedge\tau_{m+1}})-Y^{m,n}_{t\wedge\tau_{m+1}}|^2\leq N J +  N n^{-1+\eps}.
\end{equs}
Putting \eqref{eq:explanation1}-\eqref{eq:estimate-Y,Y^n}-\eqref{eq:est-phi(x)-Y} together, we get
\begin{equ}\label{44}
\sup_{t\in[0,T]}\E \big(  \one_{\tau_m  \leq t}  |X_{t\wedge\tau_{m+1}}-X^n_{t\wedge\tau_{m+1}}|^2\big)  
\leq  N  \E |  X_{\tau_m }-  X^n_{\tau_m }|^2 +N n^{-1+\eps}+NJ.
\end{equ}
Notice that 
\begin{equ}
J\leq NT\sup_{t\in[0,T]}\E \big(  \one_{\tau_m  \leq t}  |X_{t\wedge\tau_{m+1}}-X^n_{t\wedge\tau_{m+1}}|^2\big).
\end{equ}
Therefore, provided that $T$ is sufficiently small, gives the last term on the right-hand side of \eqref{44} can be absorbed at a price of a constant factor. 
We then choose $T$ in place of $t$, which implies $\one_{\tau_m\leq T}=1$ and $T\wedge\tau_{m+1}=\tau_{m+1}$, yielding
\begin{equ}
\E|X_{\tau_{m+1}}-X^n_{\tau_{m+1}}|^2\leq N  \E |  X_{\tau_m }-  X^n_{\tau_m }|^2 +N n^{-1+\eps}.
\end{equ}
This by induction gives for $m \geq 1$
\begin{equs}
\E|X_{\tau_{m}}-X^n_{\tau_{m}}|^2 \leq N_0^m n^{-1+\eps},
\end{equs}
for some constant $N_0=N_0(C,\eps,\sup|b|)$ that we fix for the remainder of the proof. It remains to remove the stopping time from the estimate. We write
\begin{equs}
\E|X_T-X^n_T|^2
& \leq \E \big(  |X_{\tau_{m}}-X^n_{\tau_{m}}|^2\big) + \E \big(\one_{\tau_m< T} |X_T-X^n_T|^2\big)
\\
& \leq N_1^m n^{-1+\eps}+N \big(\bP( \bar{\tau}_m <   T)\big)^{1/2}.
\end{equs}
Define a new set of stopping times by
\begin{equs}
\hat{\tau}_0:= 0, \qquad \hat{\tau}_{m+1}:= \inf \{ t \geq \hat\tau_m : |W_t-W_{\tau_m } |>1/2\}.
\end{equs}
For $T< (2\sup|b|)^{-1}$, the contribution of the drift to $X$ is strictly smaller than $1/2$, and hence $\hat\tau_m\leq\bar\tau_m$.
Also, by the strong Markov property we have that 
the random variables $\{ \hat{\tau}_{m}-\hat{\tau}_{m-1} \}_{m=1}^\infty$ are independent and identically distributed.
Consequently, we have 
$$
\bP( \hat{\tau}_m \leq T )  \leq \prod_{i=1}^m \bP (\hat{\tau}_{i}-\hat{\tau}_{i-1} \leq T ) = \big( \bP (\hat{\tau}_{1} \leq T)\big)^m.
$$
As $T\to 0$, one has $\lim_{T \to 0} \bP(\hat{\tau}_1\leq T)=0$, and therefore $\alpha_T:=-\log\bP(\hat{\tau}_1\leq T)\to\infty$. Hence, we obtain for all $m \in \mathbb{N}$
\begin{equs}
\E|X_T-X^n_T|^2
&   \leq N_0^m n^{-1+\eps}+N e^{- \alpha_T m},
\end{equs}
which upon choosing $m = \ceil*{ \eps \frac{\ln n }{\ln N_0}}$ and assuming that $T$ is sufficiently small (so that  $\alpha_T$ is sufficiently large) gives
\begin{equs}
\E|X_T-X^n_T|^2
&   \leq N n^{-1+2\eps},
\end{equs} 
which brings the proof to an end. (Note that since $T$ was arbitrary in the interval $(0,T_0]$, the form \eqref{eq:main bound2} with the supremum outside of the expectation follows.)
\end{proof}

\begin{appendices}
\section[Appendix]{Appendix}
\begin{proof}[Proof of Lemma \ref{lem:PDE easy}.]
It is well known that the function 
\begin{equs}              \label{eq:solution representation}
u(t,x) = \int_0^t P_{t-s}f(s,x) \, ds,
\end{equs} 
where $(P_t)_{t \geq 0}$ is the heat semigroup, 
is the unique bounded distributional solution of equation \eqref{eq:dini-no-grad}.
One can write the action of the heat semigroup with its kernel in a convolution form $P_tg (x)=(p(t,\cdot)\ast g)(x)$ and recall  that for any $k=0,1,2$ and $t\in(0,1]$, one has 
\begin{equs}          \label{eq:blow-up-heat}
\|\partial_x^kp(t,\cdot)\|_{L_1(\R^d)}\leq N_0 t^{-k/2}
\end{equs}
It then follows immediately that one has the bounds
\begin{equs}    \label{eq:est-grad}
\|u\|_{L_\infty(Q_T)}+ \| \nabla u\|_{L_\infty(Q_T)} \leq N_0 \|f\|_{L_\infty(Q_T)}
\end{equs}
We now show that $\nabla^2 u \in C((0,T)\times \bR^d)$. 
First let us remark that from \eqref{eq:dini-cond} it follows by a simple change of variables that $\int_0^1\vartheta(r^\gamma)r^{-1}\,dr<\infty$ for any $\gamma\in(0,1)$. 
Notice that for any $t >0$, $g \in \mathcal{D}$, we have
\begin{equs}
|\D_{x_ix_j} P_{t}g (x)|
 =& \left|\int_{\bR^d} p_{x_ix_j}(t,x-y)(g(y)-g(x))\, dy\right| 
\\
\leq & N(d, \gamma) \left( \|g\|_\mathcal{D} \  \vartheta(t^{1/4}) t^{-1}+ \| g\|_{L_\infty} \left| \int_{|x| > t^{1/4}} p_{x_ix_j}(t, x) \, dx \right|\right) .
\end{equs} 
Since on $|x|>t^{1/4}$ we have $|x|^2/t>T^{-1/2}$, we see that 
\begin{align*}
\big| \int_{|x| > t^{1/4}}  p_{x_ix_j}(t, x) \, dx \big|
\leq & N(d) \int_{|x|>t^{1/4}} \frac{|x|^2}{t^{d/2+2} }\exp \left(\frac{-|x|^2}{2t }\right)\, dx 
\\
\leq & N(d,T)\int_{t^{1/4}}^ \infty r^{d-1}\frac{r^2}{t^{d/2+2}}\exp \left(\frac{-r^2}{2t }\right) \, dr 
\\
\leq & N(d,T) t^{-3/2}\int_{t^{1/4}}^\infty \exp\left(\frac{-r^2}{4t}\right)\,dr
\\
\leq & N(d,T) t^{-1} \exp\left(\frac{-t^{-1/2}}{4}\right)
\end{align*} 
Clearly, the right-hand side is integrable in $t$. We can conclude
\begin{equs}            \label{eq:est-second-derivative}
\| \D_{x_ix_j} P_t g \|_{L_\infty(\bR^d)} \leq N(d, \gamma) \lambda(t) \| g\|_{\mathcal{D}},
\end{equs}
where $\lambda \in L_1( 0,T)$. It follows then that for each $t \in (0,T)$, $u(t)$ is twice differentiable in space. Moreover, since 
\begin{equs}
\D_{x_i x_j} u(t,x)= \int_0^t \int_{\bR^d} \D_{x_i x_j}p(s,y) f(t-s, x-y) \, dy \, ds,
\end{equs}
the continuity of $\D_{x_i x_j}u$ in $(t,x)$ follows easily by the continuity of $f$ and \eqref{eq:est-second-derivative}, by virtue of Lebesgue's theorem on dominated convergence. The estimate 
\begin{equs}
\|\nabla^2 u\|_{L_\infty(Q_T)} \leq N_0 \sup_{t \leq T} \| f(t)\|_{\mathcal{D}}
\end{equs}
follows from \eqref{eq:est-second-derivative}. 
 The fact that $\D_tu \in C((0,T)\times \bR^d)$ now follows from the equation. Finally, \eqref{eq:dini-no-grad} follows from \eqref{eq:est-grad}, \eqref{eq:est-second-derivative}, and the equation. 
 
 We proceed with the proof for the remaining two estimates. Estimate \eqref{eq:est-u} follows immediately from \eqref{eq:solution representation} and  \eqref{eq:blow-up-heat}. Finally for the last estimate we have
 \begin{equs}      
 |\D_{x_i}u(t,x)-\D_{x_i}u(t,y)|&\leq   \|f\|_{L_\infty(Q_T)} \int_0^t\int_{\bR^d} |p_{x_i}(t-s,x-z)-p_{x_i}(t-s,y-z) |\, dz ds.
 \\
 \label{eq:Holder-derivative}
 \end{equs}
By \eqref{eq:blow-up-heat} we have  
 \begin{equs}
 \int_{\bR^d} |p_{x_i}(t-s,x-z)-p_{x_i}(t-s,y-z) |\, dz \leq N(d)  |t-s|^{-1/2},
 \end{equs}
 and by the fundamental theorem of calculus and \eqref{eq:blow-up-heat} we have 
 \begin{equs}
 \int_{\bR^d} |p_{x_i}(t-s,x-z)-p_{x_i}(t-s,y-z) |\, dz & \leq N(d) |x-y| \| \nabla p_{x_i}(t-s)\|_{L_1(\bR^d)}
 \\
&  \leq N(d) |x-y| |t-s|^{-1},
 \end{equs}
 which imply that for all $\alpha \in [0, 1]$ 
 \begin{equs}
 \int_{\bR^d} |p_{x_i}(t-s,x-z)-p_{x_i}(t-s,y-z) |\, dz \leq N(d)|x-y|^\alpha  |t-s|^{-(1+\alpha)/2}.
 \end{equs}
 This combined with $\eqref{eq:Holder-derivative}$ shows \eqref{eq:est-grad-Holder}. The lemma is proved.
 \end{proof}
 
 \begin{proof}[Proof of \eqref{eq:estimate-Y,Y^n}] 
 By definition of $\phi_z$ and \eqref{eq:all-Lip} we have a uniform (in $n$ and $m$) bound on the initial condition for $Y^{n,m}$:
 \begin{equs}
 \E  | \phi_{X_{\tau_m}}(X^n_{\tau_m})|^2 \leq N(1+ 
 \E |X^n_{\tau_m}|^2)\leq N(1+  
 \E \sup_{t \in [0,T]}  |X^n_t|^2)\leq N.
 \end{equs}
Combined with the (uniform in $z$) linear growth of the coefficient $\phi'_z \circ  \psi_z $, this implies by standard arguments
 \begin{equs}
 \E \sup_{t\in [\tau_m , \tau_{m+1}]} |Y^{m,n}_t|^2   \leq N. 
 \end{equs}
 From this we get 
 \begin{equs}
\E\sup_{t \in [\tau_m, \tau_{m+1}]} |Y^{m,n}_{k_n(t) \vee \tau_m}-Y^{m,n}_t|^2 & \leq \E \big| \int_{k_n(t) \vee \tau_m}^t \one_{s \leq \tau_{m+1}} \phi'_{X_{\tau_m}} \circ  \psi_{X_{\tau_m}}( Y^{m,n}_{k_n(s) \vee \tau_m}) \, ds \big|^2
\\
& \leq n^{-1} N(1+
\E \sup_{t\in [\tau_m , \tau_{m+1}]} |Y^{m,n}_t|^2 )  \leq N n^{-1}.
 \end{equs}
 By It\^o's formula again and the above inequality we have that 
 \begin{equs}
\  \E \one_{\tau_m \leq t}&|Y^{m,n}_{t\wedge {\tau_{m+1}}} -Y^m_{t \wedge\tau_{m+1} }|^2
  \\
    \leq & \E | \phi_{X_{\tau_m}}(X^n_{\tau_m}) - \phi_{X_{\tau_m}}(X_{\tau_m})|^2 
  \\
   &   \quad  +  \int_0^t  \E \one_{\tau_m \leq s  \leq \tau_{m+1} }|\phi'_{X_{\tau_m}} \circ   \psi_{X_{\tau_m}} (Y^{m,n}_{k_n(s) \vee \tau_m} )- \phi'_{X_{\tau_m}} \circ  \psi_{X_{\tau_m}} (Y^{m}_s )|^2 \, ds 
  \\
   \leq & \E | \phi_{X_{\tau_m}}(X^n_{\tau_m}) - \phi_{X_{\tau_m}}(X_{\tau_m})|^2 
  \\
 & \quad   + N \int_0^t   \E \one_{\tau_m \leq s \leq \tau_{m+1} } |Y^{m,n}_{k_n(s) \vee \tau_m}-Y^{m,n}_s|^2+  N \E \one_{\tau_m \leq s} |Y^{m,n}_{s \wedge \tau_{m+1}}-Y^m_{s \wedge \tau_{m+1}}|^2 \, ds
  \\
  \leq & \E | \phi_{X_{\tau_m}}(X^n_{\tau_m}) - \phi_{X_{\tau_m}}(X_{\tau_m})|^2 + N n^{-1}+ N \int_0^t  \E \one_{\tau_m \leq s} |Y^{m,n}_{s \wedge \tau_{m+1}}-Y^m_{s \wedge \tau_{m+1}}|^2 \, ds,
 \end{equs}
 and \eqref{eq:estimate-Y,Y^n} follows from Gronwall's lemma applied to the function $t\mapsto\E \one_{\tau_m \leq t}|Y^{m,n}_{t\wedge {\tau_{m+1}}} -Y^m_{t \wedge\tau_{m+1} }|^2$.

 \end{proof}

\end{appendices}


\bibliographystyle{Martin}

\end{document}